\let\old@setaddresses\@setaddresses
\def\@setaddresses{\bigskip\bgroup\parindent 0pt\let\scshape\relax\old@setaddresses\egroup}
\def\thm@space@setup{
\thm@preskip=4mm
\thm@postskip=0mm
}
\newtheorem{theorem}{Theorem}
\newtheorem{lemma}[theorem]{Lemma}
\newtheorem{conjecture}[theorem]{Conjecture}
\newtheorem{question}[theorem]{Question}
\let\leq\leqslant
\let\geq\geqslant
\DeclareMathOperator{\lb}{{\mu}}
\begin{document}

\title{Minimum maximal matchings in cubic graphs}

\author[W.~Cames van Batenburg]{Wouter Cames van Batenburg}
\address[W.~Cames van Batenburg]{D\'epartement d'Informatique, 
Universit\'e Libre de Bruxelles, 
Brussels, Belgium}
\email{w.p.s.camesvanbatenburg@tudelft.nl}

\thanks{Supported by an ARC grant from the Wallonia-Brussels Federation of Belgium}

\date{\today}
\sloppy

\begin{abstract}
We prove that every connected cubic graph with $n$ vertices has a maximal matching of size at most $\frac{5}{12} n+ \frac{1}{2}$. This confirms the cubic case of a conjecture of Baste, F\"urst, Henning, Mohr and Rautenbach (2019) on regular graphs. More generally, we prove that every graph with $n$ vertices and $m$ edges and maximum degree at most $3$ has a maximal matching of size at most $\frac{4n-m}{6}+ \frac{1}{2}$.
These bounds are attained by the graph $K_{3,3}$, but asymptotically there may still be some room for improvement.
 Moreover, the claimed maximal matchings can be found efficiently.
  As a corollary, we have a $\left(\frac{25}{18} + O \left( \frac{1}{n}\right)\right) $-approximation algorithm for minimum maximal matching in connected cubic graphs.  
\end{abstract}

\maketitle

\section{Introduction}\label{sec:intro}

All graphs in this paper are undirected, finite and simple. A graph $G$ is said to be \emph{subcubic} if $G$ has maximum degree at most $3$, and \emph{cubic} if $G$ is $3$-regular. A subset $M$ of edges of a graph $G$ is a \emph{matching} if no two edges of $M$ are adjacent in $G$. A \emph{maximal matching} is a matching that cannot be extended to a larger matching. 
 For a graph $G$, let $\gamma(G)$ denote the minimum size of a maximal matching of $G$. The parameter $\gamma(G)$ is sometimes also called the \emph{edge domination number}.

The problem of finding a minimum maximal matching goes back a long way~\cite{GJ79} and is known to be NP-hard, even when restricted to cubic bipartite or cubic planar graphs~\cite{YG80,HK93,DE08,DE08bis} or subcubic induced subgraphs of the grid~\cite{DE13}.
 Approximating the problem within a factor smaller than $7/6$ (in general) or $1+\frac{1}{487}$ (when restricted to cubic graphs) is NP-hard as well~\cite{CC06}. 
 
On the other hand, it is well known that every maximal matching of $G$ provides a $2$-approximation for $\gamma(G)$.  
There also exist efficient algorithms that approach $\gamma(G)$ within a factor strictly smaller than $2$, see e.g.~\cite{CLLLM05,SLL09, GLR09, SV12}, yet most of the works just mentioned focus on the general case or on very dense graphs. 
There also exist polynomial-time algorithms that compute $\gamma(G)$ for more restricted graph classes, e.g. trees~\cite{MH77}, series-parallel graphs~\cite{RP88} and graphs of bounded clique width~\cite{EGW01}.
 In this paper, we instead focus on graphs with bounded degree.
Concretely, we study the following recent conjecture of Baste, F\"urst, Henning, Mohr and Rautenbach.

\begin{conjecture}[\cite{BFHMR19}]\label{conj:regulargraphs}
Let $G$ be a connected $\Delta$-regular graph of order $n$ for some $\Delta \geq 3$, then 
$$\gamma(G) \leq \frac{2\Delta-1}{4\Delta}n + \frac{1}{2}.$$
\end{conjecture}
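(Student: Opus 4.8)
The plan is to pass to an equivalent formulation in which the quantity to be controlled is an independent set rather than a matching. A matching $M$ is maximal exactly when the set $U=V(G)\setminus V(M)$ of vertices it misses is independent, and in that case $|M|=\tfrac12(n-|U|)$ and $M$ is a perfect matching of $G-U$. Conversely, any independent $U$ for which $G-U$ has a perfect matching yields a maximal matching of size $\tfrac12(n-|U|)$. Hence
$$\gamma(G)=\frac{n-u(G)}{2},\qquad u(G):=\max\bigl\{\,|U|:\ U\text{ independent and }G-U\text{ has a perfect matching}\,\bigr\},$$
and, since $\tfrac{2\Delta-1}{4\Delta}=\tfrac12-\tfrac{1}{4\Delta}$, the conjecture is equivalent to the lower bound $u(G)\ge\tfrac{n}{2\Delta}-1$. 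This turns an upper bound on a minimisation into a lower bound on the size of a \emph{sparse} independent set that can be completed to a $1$-factor, which I find easier to attack. As a first sanity check I would verify the extremal case $K_{\Delta,\Delta}$: its only maximal matchings are perfect, so $\gamma=\Delta$, while the claimed bound is $\tfrac{2\Delta-1}{4\Delta}\cdot 2\Delta+\tfrac12=\Delta$; thus $u(K_{\Delta,\Delta})=0=\tfrac{n}{2\Delta}-1$, the bound is tight, and there is no slack in the additive constant to exploit.

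To bound $u(G)$ from below I would imitate the engine behind the cubic case, proving a more flexible, degree-sensitive statement for the whole class of graphs of maximum degree at most $\Delta$ (not merely the regular ones), so that induction on $|V(G)|$ has room to operate. Concretely I would seek a bound $\gamma(G)\le\Phi_\Delta(G)+\tfrac12$ valid for every $G$ with $\Delta(G)\le\Delta$, where $\Phi_\Delta$ is a calibrated linear functional of $n$ and $m$, the natural candidate being $\Phi_\Delta(G)=\tfrac{a_\Delta n-m}{b_\Delta}$. The pair $(a_\Delta,b_\Delta)$ must satisfy $\tfrac{a_\Delta-\Delta/2}{b_\Delta}=\tfrac{2\Delta-1}{4\Delta}$ so that $\Phi_\Delta$ collapses to $\tfrac{2\Delta-1}{4\Delta}n$ on $\Delta$-regular graphs — for $\Delta=3$ this reproduces the subcubic bound $\tfrac{4n-m}{6}+\tfrac12$ with $(a_3,b_3)=(4,6)$. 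The remaining degree of freedom in $(a_\Delta,b_\Delta)$ is exactly what one tunes so that the inductive step closes, and fixing it correctly is already part of the problem.

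The inductive step itself would be greedy: commit one edge (or a small cluster of edges) to the matching, delete its endpoints together with all incident edges, and recurse on the resulting graph, which again has maximum degree at most $\Delta$. If a committed configuration deletes a vertex set $S$ with $k$ matching edges, then $\gamma(G)\le k+\gamma(G-S)$, so one needs $\Phi_\Delta(G)-\Phi_\Delta(G-S)\ge k$, i.e.\ the number of edges meeting $S$ must not exceed $(2a_\Delta-b_\Delta)k$. Here lies the first real difficulty: the crude \emph{per-edge} accounting (delete a single matched pair) over-spends, because deleting two vertices of total degree $2\Delta$ removes far too many edges relative to the unit gained. The art is therefore to choose $S$ dense enough — many internal edges, preferably absorbing low-degree neighbourhoods around a high-degree vertex — that the edge budget balances in \emph{amortised} form over the whole configuration, and to certify that some reducible configuration of this kind always exists.

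The main obstacle, and the reason the conjecture is open beyond $\Delta=3$, is precisely this existence-of-a-good-reduction problem. For $\Delta=3$ the reducible configurations form a short, checkable list; for general $\Delta$ the local neighbourhood types proliferate and a purely local discharging scheme seems destined to leak exactly where a high-degree vertex is surrounded by low-degree ones. I expect that closing the gap requires a genuinely global argument run through the reformulation $\gamma(G)=\tfrac12(n-u(G))$ above: most plausibly a Gallai--Edmonds or Tutte--Berge analysis of the deficiency of $G-U$, combined with an averaging estimate showing that a random or greedily grown maximal matching of a $\Delta$-regular graph misses at least $\tfrac{n}{2\Delta}-1$ vertices in expectation, with equality forced only by the complete-bipartite structure of $K_{\Delta,\Delta}$. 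Establishing such an expectation bound \emph{uniformly} over all $\Delta$-regular graphs, rather than for typical ones, is where I expect the substantive work — and the likely point of failure of the naive inductive scheme — to concentrate.
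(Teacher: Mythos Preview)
This statement is a \emph{conjecture} in the paper, not a theorem: the paper proves only the case $\Delta=3$ (Theorem~\ref{thm:vertices}, via the technical Theorem~\ref{thm:maincubic}), and explicitly leaves the general case open. There is therefore no ``paper's own proof'' of Conjecture~\ref{conj:regulargraphs} to compare against, and your submission is, by your own description, a research plan rather than a proof --- you correctly identify that the conjecture is open beyond $\Delta=3$ and close by saying where you ``expect the substantive work'' to concentrate.

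On content: your reformulation $\gamma(G)=\tfrac12(n-u(G))$ is correct, and the equivalence with $u(G)\ge \tfrac{n}{2\Delta}-1$ is a clean restatement, but the paper does not go this route at all; nothing resembling Gallai--Edmonds, Tutte--Berge, or a probabilistic expectation bound appears. Your calibrated functional $\Phi_\Delta(G)=\tfrac{a_\Delta n-m}{b_\Delta}$ with $(a_3,b_3)=(4,6)$ does match in spirit what the paper uses for $\Delta=3$, but the paper's actual inductive engine needs strictly more: the working bound is $\lb(G)=\tfrac{4n-m}{6}+\tfrac{2I+K-n_1}{6}$, where the correction terms $I$ (cubic indicator), $K$ ($K_2$ indicator), and $n_1$ (number of leaves) are essential. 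In particular, the difficulty you flag --- that deleting a single matched pair removes $2\Delta-1$ edges and over-spends --- is exactly why the paper carries the $+\tfrac{I}{3}$ slack and proves a dedicated lemma (Lemma~\ref{lem:technicallemma}) guaranteeing that one can always reduce to a graph with \emph{no cubic component}, so that this slack is only cashed in once, at the very last step. Your outline does not anticipate any such mechanism.

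In short: nothing you wrote is wrong, but nothing constitutes a proof. For $\Delta=3$ you have the right functional but none of the structural lemmas (minimum degree~$2$, bridgeless, no adjacent degree-$2$ vertices, no degree-$2$ vertex with two cubic neighbours) that the paper grinds through; for general $\Delta$ you offer only heuristics. The genuine gap is that the ``existence-of-a-good-reduction'' problem you name is the entire proof, and it is not addressed.
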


For every $\Delta \geq 3$, the authors of~\cite{BFHMR19} provided evidence for Conjecture~\ref{conj:regulargraphs} in the form of a weaker upper bound which for cubic graphs (i.e.~ $\Delta=3$) specialises to $\gamma(G) \leq \frac{9}{20} n+ \frac{3}{10}$. 
Furthermore, for cubic graphs they confirmed Conjecture~\ref{conj:regulargraphs} under the additional conditions that the graph is bipartite and does not contain a certain $6$-vertex tree as an induced subgraph. 
The $\frac{9n}{20} +O(1)$ bound for cubic graphs was previously derived by Duckworth and Wormald through a linear programming analysis of greedy algorithms on cubic graphs~\cite{DW10}. With a more problem-specific approach, Zwo\'zniak~\cite{Z06} obtained the better bound $\gamma(G) \leq \frac{4n}{9}+\frac{1}{3}$.

In this paper, we improve these results by proving Conjecture~\ref{conj:regulargraphs} for cubic graphs:

\begin{theorem}\label{thm:vertices}
Let $G$ be a connected cubic graph of order $n$. Then
$$\gamma(G) \leq \frac{5}{12}n + \frac{1}{2},$$
with equality if and only if $G$ is isomorphic to the complete bipartite graph $K_{3,3}$.
\end{theorem}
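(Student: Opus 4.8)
The plan is to deduce Theorem~\ref{thm:vertices} from the sharper weighted bound announced in the abstract, namely that every connected subcubic graph $G$ with $n$ vertices and $m$ edges satisfies $\gamma(G) \le \frac{4n-m}{6} + \frac12$; since a connected cubic graph has $m = \frac{3n}{2}$, this specialises to $\frac{4n - 3n/2}{6} + \frac12 = \frac{5n}{12} + \frac12$. Passing to the subcubic category is essential, because the natural inductive reductions destroy $3$-regularity. It is convenient to rewrite the target in the potential form $\frac{4n-m}{6} = \frac{1}{12}\sum_{v}(8 - \deg(v))$, which assigns weight $\frac{5}{12}, \frac12, \frac{7}{12}, \frac23$ to vertices of degree $3,2,1,0$ respectively.

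I would prove the subcubic bound by induction on $n+m$, analysing a minimal counterexample $G$. The engine is the following extension principle: if $F$ is a set of pairwise non-adjacent edges of $G$ and $G' = G - V(F)$, then any maximal matching $M'$ of $G'$ combines with $F$ to give a maximal matching $M' \cup F$ of $G$, because the $M'$-unmatched vertices form an independent set of the induced subgraph $G'$, hence of $G$. Thus each reduction selects a small edge set $F$ to add to the matching, deletes the covered vertices, and recurses. Deleting a vertex set carrying $k$ vertices and $\ell$ incident edges lowers the potential by exactly $\frac{4k-\ell}{6}$, so a reduction adding $|F|$ edges is affordable precisely when $\frac{4k-\ell}{6} \ge |F|$, up to the additive constant. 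For a single edge $uv$ this demands $\deg(u)+\deg(v) \le 3$, which already disposes of graphs containing leaves or adjacent low-degree vertices.

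The substance of the argument is a catalogue of reducible configurations, together with a proof that a minimal counterexample contains none of them. The difficulty is concentrated in the genuinely cubic regime, where deleting the two endpoints of one matched edge drops the potential by only $\frac12$, which does not pay for the edge. I would overcome this in two complementary ways. First, by amortisation: once any vertex is deleted, its neighbours fall to degree at most $2$, so the graph leaves the cubic regime and subsequent reductions become cheap, and the potential $\frac{4n-m}{6}$ is calibrated so that the savings from these degree drops are recovered exactly. Second, by multi-vertex configurations that place vertices directly into the unmatched independent set, thereby realising the slack of roughly $\frac{n}{6}$ uncovered vertices that the bound permits. Designing a finite list of such configurations that provably covers every subcubic graph, while controlling the number of components created by deletions so that only a single additive $\frac12$ is ever incurred, is the main obstacle.

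Finally, I would extract the equality case by tracking tightness through the induction. Each reducible configuration yields a strict gain except in degenerate positions, so a graph attaining $\gamma(G) = \frac{5n}{12} + \frac12$ must be reduction-free and must force equality at every deletion step; a short rigidity analysis then pins down the base case and shows that the unique extremal connected cubic graph is $K_{3,3}$, whose perfect matching of size $3$ is its only minimum maximal matching. The remaining work, namely verifying the finitely many small base cases and checking the affordability inequality for each configuration, is routine and I would defer it to the case analysis.
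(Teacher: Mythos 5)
Your opening reduction is exactly right and matches the paper: a connected cubic graph has $m=\tfrac{3n}{2}$, so the subcubic bound $\gamma(G)\le\frac{4n-m}{6}+\frac12$ (with equality only for $K_{3,3}$) specialises to $\frac{5n}{12}+\frac12$. The gap lies in the induction you propose for that subcubic bound: the hypothesis is too weak to sustain itself, and your key calibration claim is false in precisely the hard regime. Concretely, in a cubic graph, deleting the two endpoints of an edge $u_1u_2$ removes $2$ vertices and $5$ edges (whether or not $u_1,u_2$ share a neighbour), so your potential $\frac{4n-m}{6}$ drops by $\frac{4\cdot 2-5}{6}=\frac12$ while the matching gains a full edge; your delete-only extension principle therefore runs a deficit of $\frac12$ at every step entered from the cubic regime, and ``amortisation'' cannot recoup it, since after the reduction you face a smaller graph to which only the same bound applies. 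The paper closes this deficit by strengthening the induction statement itself: for connected subcubic $G$ not isomorphic to $K_{3,3}$ it proves $\gamma(G)\le\frac{4n-m}{6}+\frac{2I+K-n_1}{6}$, where $I=1$ exactly when $G$ is cubic, so cubic graphs carry an extra $\frac13$ of slack; a separate technical lemma (using bridgelessness) shows one can always add an edge $e$ between neighbours of the deleted set so that $G-V_0+e$ has \emph{no} cubic component. Every reduction then lands in the non-cubic case where the bound is $\frac13$ better, which pays the bill exactly: with the added edge, $\delta_n=2$, $\delta_m=4$, $\delta_I=1$, giving $\frac{4\cdot2-4+2}{6}=1\ge\delta_\gamma$. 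Your framework has no counterpart: the extension principle only covers reductions of the form $G'=G-V(F)$, whereas the reductions that actually pay add edges and edit the matching (swapping $e$ out for two edges of $G$ when $e$ is used), and your ``unmatched independent set'' configurations are left unspecified at exactly the point where the difficulty sits.

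The additive constant is a second genuine gap, which you name as ``the main obstacle'' but do not resolve. Reductions can disconnect the graph, and a bridge analysis applies the bound to two pieces, so the naive statement $\gamma\le\frac{4n-m}{6}+\frac12$ accumulates multiple $+\frac12$ terms. The paper's refined statement handles this with the $-\frac{n_1}{6}$ penalty for degree-$1$ vertices and the $+\frac{K}{6}$ allowance for $K_2$, together with the clause that a graph with a pendant edge $uv$ admits a small maximal matching \emph{avoiding} $uv$; its bridge lemma then exploits floor and mod-$6$ arithmetic on the quantities $4n_i-m_i$ to absorb the constants. The equality characterisation also does not need to be ``tracked through the induction'' as you suggest: the paper builds it in by excluding $K_{3,3}$ from the inductive statement, so every other connected cubic graph satisfies $\gamma(G)\le\frac{5n}{12}+\frac13<\frac{5n}{12}+\frac12$, while $\gamma(K_{3,3})=3$ attains the bound. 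In short, your plan is the paper's plan in silhouette, but the two mechanisms that make the induction self-sustaining --- the cubic slack $\frac{I}{3}$ combined with the no-cubic-component lemma, and the $n_1,K$ corrections with the pendant-edge clause --- are absent, and without them the scheme as written fails already at the first genuinely cubic reduction.
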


In doing so, we derive the following slightly more general result, which also applies to non-regular graphs.

\begin{theorem}\label{thm:edges}
Let $G$ be a connected subcubic graph on $n$ vertices and $m$ edges. Then 
$\gamma(G) \leq \frac{4n-m}{6}+ \frac{1}{2}$, with equality if and only if $G$ is isomorphic to $K_{3,3}$.
Moreover, if $G$ is not cubic then
$\gamma(G) \leq \frac{4n-m}{6}.$
\end{theorem}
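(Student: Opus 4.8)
The plan is to prove Theorem~\ref{thm:edges} by induction on the number of vertices $n$, since this statement (with the edge count $m$ appearing explicitly) is better suited to induction than the purely vertex-based Theorem~\ref{thm:vertices}; indeed Theorem~\ref{thm:vertices} should follow as an immediate corollary by substituting $m = \tfrac{3}{2}n$ for a cubic graph. The central idea is the standard one for edge-domination bounds: locate a vertex $v$ of small degree (or a suitable local configuration), add one or two carefully chosen edges incident to a high-degree vertex into the matching $M$ being built, then delete the endpoints of those edges together with all of their incident edges, and apply the induction hypothesis to the smaller graph $G'$ that remains. Choosing which edges to add to the partial matching is exactly what lets us control the trade-off between the number of vertices removed and the number of edges removed, which is the quantity $4n - m$ that the bound tracks.

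\emph{First} I would set up the induction carefully, treating small cases (say $n \le 6$, which includes the extremal $K_{3,3}$) by hand and being explicit about how the $+\tfrac12$ additive constant behaves. The key bookkeeping observation is that if we add a set $S$ of edges to the matching and delete their endpoints, removing $\Delta n$ vertices and $\Delta m$ edges, then the induction hypothesis on $G'$ gives $\gamma(G) \le |S| + \gamma(G') \le |S| + \tfrac{4(n-\Delta n)-(m-\Delta m)}{6} + \tfrac12$, so the inductive step succeeds precisely when $|S| \le \tfrac{4\,\Delta n - \Delta m}{6}$. \emph{Second}, I would organize the argument by the minimum degree $\delta(G)$. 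When $G$ has a vertex of degree $1$ or $2$, deleting a neighborhood removes relatively few vertices compared to edges, making the inequality $6|S| \le 4\,\Delta n - \Delta m$ easy to satisfy; the cubic-versus-non-cubic dichotomy in the statement is what lets us claim the stronger bound (without the $+\tfrac12$) whenever a low-degree vertex forces us off the $K_{3,3}$ extremal case.

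\emph{Third}, and this is the main obstacle, I would handle the genuinely cubic (or nearly cubic) regime, where every surviving vertex has degree $3$ and the naive deletion removes too many edges per vertex to close the induction directly. Here one typically needs a \emph{discharging-style} or case-analysis argument over the local neighborhood structure: for a chosen edge $uv$, one examines the configuration induced on the up-to-four neighbors of $u$ and $v$, and selects $S$ (one edge, or two disjoint edges spanning the neighborhood) so as to delete a favorable ratio of vertices to edges. The difficulty is that deleting a vertex of degree $3$ and its neighborhood can disconnect $G$ or create many small components, so one must either ensure connectivity is preserved in the residual graph or strengthen the induction hypothesis to apply componentwise — summing $\tfrac{4n_i - m_i}{6}$ over components $G_i$ while accounting for the additive constants, and invoking the non-cubic bound $\tfrac{4n-m}{6}$ (with no $+\tfrac12$) on each component that has been locally damaged so that the constants do not accumulate destructively.

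\emph{Finally}, I would isolate the equality characterization. Throughout the reductions one records when each inequality can be tight; since every inductive step that involves a low-degree vertex or a disconnection strictly beats the bound, equality can only propagate through a chain of steps that each stay exactly on the extremal edge, and tracing this back should force the graph to be $K_{3,3}$. I expect the hardest part of the whole proof to be the exhaustive local case analysis in the cubic regime — choosing, for each possible neighborhood pattern, a set $S$ that simultaneously satisfies $6|S| \le 4\,\Delta n - \Delta m$ and leaves a residual graph to which the (possibly strengthened, componentwise) induction hypothesis genuinely applies.
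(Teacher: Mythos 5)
Your high-level framework (induction/minimum counterexample, the accounting $6|S|\le 4\,\Delta n-\Delta m$, componentwise strengthening) matches the paper's in spirit, but the proposal has two genuine gaps, and your prediction of where the difficulty lies is inverted relative to what actually works. First, the induction hypothesis you propose is too weak to close even the easy steps. With pure deletion (remove exactly the endpoints of $S$), the configuration of two adjacent degree-$2$ vertices $u_1,u_2$ already fails: $|S|=1$, $\Delta n=2$, $\Delta m=3$, and $6\not\le 4\cdot 2-3=5$. The paper's reductions therefore \emph{add edges} to the reduced graph (e.g.\ replace $u_1,u_2$ by a new edge $v_1v_2$, effectively contracting $u_1v_1$ and $u_2v_2$), extending $M'$ differently according to whether the added edge lies in $M'$ — a device absent from your plan. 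Moreover, your fallback of applying the clean non-cubic bound $\frac{4n-m}{6}$ per component is still not enough: several steps (degree-$1$ vertices, the cubic endgame) only balance because the paper's Theorem~\ref{thm:maincubic} proves the strictly stronger quantity $\lb(G)=\frac{4n-m}{6}+\frac{2I+K-n_1}{6}$, i.e.\ an extra $-n_1/6$ credit for each pendant vertex created in $G'$, and — crucially for bridges, which your plan never addresses — a clause that the small maximal matching can be chosen to \emph{avoid} the pendant edge at a degree-$1$ vertex. Without that clause the two matchings across a bridge cannot be glued, and without the $-n_1/6$ term the accounting sits exactly on the boundary at each step, so your plan to ``trace equality back to $K_{3,3}$'' fails: equality would be permitted to propagate through every cubic-regime step, not just through $K_{3,3}$.

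Second, once edges are added to $G'$, a new hazard appears that your plan cannot see: the reduced graph may acquire a \emph{cubic component}, which (in the strengthened statement) forfeits the $2I/6$ credit and breaks the arithmetic. The paper's dedicated device, Lemma~\ref{lem:technicallemma}, uses bridgelessness to choose \emph{which} non-edge between neighbours of the deleted set $V_0$ to add so that $G-V_0+e$ has no cubic component. This is the key idea your proposal is missing, and it reverses your expectation about the hard part: precisely because the reduced graphs are kept non-cubic throughout, the paper does essentially \emph{no} exhaustive case analysis in the cubic regime — once the minimum counterexample is shown to be cubic, the $2I/6$ slack finishes the proof in a few lines ($\delta_{\lb}\ge\frac{4\cdot 2-4+2}{6}=1\ge\delta_{\gamma}$). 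The genuinely hard case analysis lives instead in the degree-$2$ structure (Lemma~\ref{lem:nocubicneighbours}, especially Case $2.3.2$, where \emph{two} edges are added and one must argue via an auxiliary graph on the six neighbours that a cubic component can be avoided). So while your plan has the right skeleton, it lacks the strengthened inductive statement (the $n_1$, $K$, $I$ corrections and the pendant-edge-avoidance clause), the bridge analysis, the edge-addition reductions, and the cubic-component-avoidance lemma — and several of its concrete steps fail numerically as stated.
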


An effective and widely used approach to prove theorems on subcubic graphs is to take a hypothetical minimum counterexample and then analyse small cut sets as well as the local structure around vertices of small degree, followed by an analysis of the local structure in a highly connected cubic graph, eventually leading to a contradiction~(see e.g.~\cite{CGJ20,BDMNPP19,JRS14} for some recent examples). This paper is no exception. However, one way in which perhaps our approach stands out is how we carefully almost avoid the need to analyse any cubic graph (at the cost of a slightly worse result than is perhaps possible; see the discussion of Conjecture~\ref{conj:K33}). 
This is desirable because the cubic graphs form the hard cases in our context. 
Relatedly, to make the `induction' work, we have devised the following technical theorem, which has Theorems~\ref{thm:vertices} and~\ref{thm:edges} as direct corollaries.

\begin{theorem}\label{thm:maincubic}
Let $G$ be a connected subcubic graph which is not isomorphic to $K_{3,3}$ and has $n$ vertices, $m$ edges and $n_1$ vertices of degree $1$. Let $I$ be the indicator variable being $1$ if $G$ is cubic, and equal to $0$ otherwise. Let $K$ be the indicator variable being $1$ if $G$ is isomorphic to the complete graph $K_2$, and equal to $0$ otherwise. Then

$$\gamma(G) \leq \lb(G),$$
where
$$\lb(G):=\frac{4n- m}{6} + \frac{ 2I + K -n_1}{6}.$$


Moreover, if $G$ is not isomorphic to $K_2$ and has a degree-$1$ vertex $v$ with neighbour $u$, then $G$ has a maximal matching of size at most $\lb(G)$ which avoids the edge $uv$.
\end{theorem}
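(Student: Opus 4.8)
The plan is to proceed by induction on the number of vertices $n$, taking a connected subcubic graph $G \neq K_{3,3}$ and showing $\gamma(G) \leq \lb(G)$ together with the avoidance property for the pendant edge $uv$. The base cases are tiny graphs (isolated edge $K_2$, paths, triangles, $K_4$ minus an edge, etc.), where both the bound and the avoidance claim can be checked by hand; note that the $K$ term in $\lb(G)$ is precisely there to make the bare edge $K_2$ work out. The inductive step deletes a carefully chosen set of vertices or edges, applies the hypothesis to each resulting connected component, and reassembles a maximal matching of the whole graph.

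**The key structural reductions** I would carry out, roughly in order of increasing difficulty, are the following. First, handle low-degree and cut vertices: if $G$ has a degree-$1$ vertex $v$ with neighbour $u$, I would like to route through the ``moreover'' clause, removing $\{u,v\}$ (or just $v$) and splicing. If $G$ has a cut vertex or a small edge cut separating it into pieces $G_1, G_2$, I would apply induction to each piece, add back at most one matching edge across the cut if needed to preserve maximality, and verify that the $\lb$-values add up with enough slack — here the additive constants $\frac{1}{2}$ absorbed into $\lb$ and the indicator terms $I, K, n_1$ must be tracked meticulously, since the slack is thin. Second, with all small cuts eliminated I reduce to a highly connected graph that is either genuinely cubic or has a controlled number of degree-$2$ vertices; I would then pick a vertex or short structure, include some of its incident edges in the matching, delete the matched vertices and their neighbours (to respect maximality), and induct on the smaller graph, checking that each vertex removed ``pays for itself'' in the $\lb$ accounting via the coefficients $\frac{4n-m}{6}$, i.e. that the weight $\frac{4}{6}$ per vertex minus $\frac{1}{6}$ per edge deleted covers the matching edges added.

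**The main obstacle** I anticipate is the essentially cubic, highly connected case, precisely the difficulty the introduction flags as the hard core of the problem. With no vertex of degree less than $3$ and no small cut to exploit, the induction has no cheap entry point: deleting any vertex creates degree-$2$ vertices whose $\lb$-contribution is larger, and one must argue that a sufficiently economical local matching (covering several vertices per matching edge) still exists. The designed coefficients — the $5$ per degree-$1$/degree-$3$ vertex versus $6$ per degree-$2$ vertex hinted at by $\lb$, together with the global $+2I$ correction for cubic graphs — are engineered so that a judicious choice of a local configuration (an edge together with enough of its second neighbourhood) yields exactly the $\frac{5}{12}$-type ratio. The delicate point is to find, in any cubic graph other than $K_{3,3}$, such a configuration whose deletion leaves a connected (or controllably disconnected) subcubic graph to which induction applies, and to confirm that $K_{3,3}$ is genuinely the unique tight example by showing every alternative admits strictly more savings.

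**In summary**, I expect the proof to be a discharging-free but tightly quantitative induction: reduce away pendant vertices and small cuts using the strengthened ``avoidance'' hypothesis, then resolve the cubic case by exhibiting an efficient local matching whose removal decreases $\lb$ by at least the number of edges added. I would design the argument so that the matchings are constructed greedily along the reduction, which automatically yields the claimed polynomial-time algorithm.
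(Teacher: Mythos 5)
Your overall frame---minimum counterexample, elimination of degree-$1$ vertices via the ``moreover'' avoidance clause, a bridge/cut analysis with careful $\lb$-accounting, then a reduction to the (near-)cubic case---matches the paper's skeleton. But as it stands the proposal has a genuine gap, and moreover it mislocates the difficulty. You flag the highly connected cubic case as the unresolved hard core, to be handled by some unspecified ``efficient local matching,'' and you never resolve it. In the paper that case is deliberately \emph{cheap}: the term $\frac{2I}{6}$ in $\lb(G)$ is engineered so that once the minimum counterexample is shown to be cubic, deleting two adjacent vertices $u_1,u_2$ makes $\delta_I=1$, and the resulting slack of $\frac{1}{3}$ closes the argument in a few lines (if $u_1,u_2$ have a common neighbour one also gains from $\delta_{n_1}\le -1$; otherwise, since $G\not\cong K_{3,3}$, some pair $v_{1i}v_{2j}$ of their outer neighbours is a non-edge, one adds it back, and $\delta_\gamma\le 1$ follows by a swap on that added edge). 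Your plan has no analogue of this endgame, and without it the induction genuinely stalls exactly where you predict it will.

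The second missing idea is the device that makes all the edge-adding reductions legal: whenever you form $G'=G-V_0+e$, you must guarantee $G'$ has no cubic component, or else $\delta_I=-1$ eats the slack and the inequality $\delta_\lb\ge\delta_\gamma$ fails. The paper isolates this as Lemma~\ref{lem:technicallemma}: in a bridgeless graph, given the auxiliary graph $(N,E^*)$ on the neighbours of $V_0$ whose components are singletons or have size at least three, some non-edge $e\in E^*$ can be added so that no component of $G-V_0+e$ is cubic. Your proposal never identifies this issue, yet it is used repeatedly in the real work of the proof, which is not the cubic case but the degree-$2$ structure analysis: showing no two degree-$2$ vertices are adjacent, and---the longest argument, with a multi-case analysis including a multi-set of neighbours and two simultaneously added edges $w_{11}q_1, w_{22}q_2$ chosen via an auxiliary component argument---that every degree-$2$ vertex has at most one degree-$3$ neighbour. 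These two lemmas, together with minimum degree $2$, are what force the counterexample to be cubic in the first place. Finally, a small correction: there is no additive $\frac{1}{2}$ ``absorbed into $\lb$''; the constant $\frac12$ appears only in Theorems~\ref{thm:vertices} and~\ref{thm:edges}, while $\lb(G)=\frac{4n-m}{6}+\frac{2I+K-n_1}{6}$, and in the bridge lemma the accounting is done not by generic slack but by a parity argument modulo $6$ on $4n_i-m_i$ for the two sides, using floors of $\lb$ and the avoidance clause applied to the graphs $H_i$ containing the bridge endpoint as a pendant vertex.
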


\textbf{Efficient approximation algorithm}

The proof of Theorem~\ref{thm:maincubic} is of an inductive nature (formally we will proceed by vertex-minimum counterexample). As such, the proof can be viewed as a polynomial-time algorithm that constructs for every connected subcubic graph $G$ a maximal matching of $G$ of size at most $\lb(G)$. Indeed, we repeatedly identify a set $V_0$ of at most nine vertices and apply the theorem to the smaller graph $G'$ that is obtained from $G$ by deleting $V_0$ and possibly adding one or two edges. Thus we obtain a small maximal matching of $G'$ that subsequently can be completed to a small maximal matching of $G$. In determining the appropriate set $V_0$ and the edges that are to be added back, the only external procedures that we require are (i) find a bridge $e$ (if present) and  determine the number of vertices and edges in each component of $G-e$ and (ii) determine which pairs of neighbours of $V_0$ are connected in $G-V_0$. For both of these subroutines, there exist $O(|V(G)|+|E(G)|)$ time algorithms. The number of iterations required is $O(|V(G)|)$ and therefore in total the required time is at most $O(|V(G)|^2)$.

Until now, we have solely focused on upper bounds on $\gamma(G)$. On the other hand, it is not hard to see that $\gamma(G)\geq \frac{3n}{10}$ for every cubic graph $G$ on $n$ vertices (see e.g. Lemma~3.(i) in~\cite{BFHMR19}). 
Since $\left(\frac{5n}{12}+ \frac{1}{2}\right) / \left(\frac{3n}{10}\right)= \frac{25}{18} n + \frac{5}{3}$, it follows that our proof constitutes a $\left( \frac{25}{18} +  O(\frac{1}{n}) \right)-$approximation algorithm for minimum maximal matching in connected cubic graphs. We reiterate that before this work, the same approximation ratio was already attained in a much more restricted setting, namely for the class of connected bipartite cubic graphs that do not contain a certain $6$-vertex tree as an induced subgraph~\cite{BFHMR19}.

\textbf{Tightness}

Theorem~\ref{thm:edges} is best possible insofar that the complete bipartite graph $K_{3,3}$ and several non-cubic graphs, such as the four-cycle and $K_{3,3}$ with one subdivided edge, attain the bound.
 Likewise, Theorem~\ref{thm:vertices} is best possible due to $K_{3,3}$. Furthermore, there exists a cubic graph on as many as $n=24$ vertices with $\gamma=\frac{5}{12}n$~(e.g. see figure~\ref{fig:spanningK33min}). Nevertheless, we suspect that asymptotically the pre-factor $\frac{5}{12}$ in Theorem~\ref{thm:vertices} is not optimal:

 Let $K_{3,3}^{-}$ denote the graph that is obtained from $K_{3,3}$ by deleting an edge. In~\cite{BFHMR19}, it was conjectured that for cubic connected graphs $G$, it holds that $\gamma(G)= \frac{5n}{12}+\frac{1}{2}$ if and only if $G$ has a spanning subgraph that is the union of an odd number of copies of $K_{3,3}^{-}$. (In particular this would have implied that infinitely many graphs attain the bounds of Theorems~\ref{thm:vertices} and~\ref{thm:edges}.) However, this turns out to be false, as we will now detail. Let $H_1,\ldots, H_k$ be $k\geq 1$ vertex-disjoint copies of $K_{3,3}^{-}$. Writing $p_i, q_i$ for the degree-$2$ vertices of $H_i$, we let $G_k$ be the graph obtained by adding the edges $q_i p_{i+1}$ for all $1 \leq i \leq k$ (where $p_{k+1}$ should be understood to mean $p_1$); see Figure~\ref{fig:spanningK33min}. Then $G_k$ is a cubic connected graph on $n:=6k$ vertices, which however has~\footnote{The equality $\gamma(G_k) = \lceil \frac{7k}{3} \rceil$ follows from an induction applied to $G_{k-3}$; we omit the formal proof. For the upper bound, see the pattern emerging in Figure~\ref{fig:spanningK33min}. To quickly see that $\gamma(G_k)\geq \lfloor \frac{7k}{3} \rfloor$, observe that for every maximal matching $M$ of $G_k$, every three consecutive copies $H_i,H_{i+1},H_{i+2}$ together with the edges $q_ip_{i+1},q_{i+1}p_{i+2},q_{i+2}p_{i+3}$ must contain at least $7$ edges of $M$.} a minimum maximal matching of size $\lceil \frac{7k}{3} \rceil$, so that $\gamma(G_k) =   \frac{\lceil 7k/3\rceil }{6k} n \leq \frac{7}{18} n + \frac{2}{3}$. For every $k\geq 3$, this is smaller than $\frac{5}{12}n$. 
This raises the question whether asymptotically, as the number of vertices increases, the bound of Theorem~\ref{thm:vertices} is improvable.

\begin{question}
What is the infimum $\rho_0$ over all constants $\rho>0$ for which there exists an $N>0$ such that every connected cubic graph $G$ on $n>N$ vertices satisfies
$$\frac{\gamma(G)}{n} \leq \rho?$$
\end{question}

The graphs $\left\{G_k\right\}_{k \geq 1}$ and Theorem~\ref{thm:vertices} certify that $\frac{7}{18} \leq \rho_0 \leq \frac{5}{12}$. 
We suspect that $\rho_0=\frac{7}{18}$ and that moreover the following is true.

\begin{conjecture}\label{conj:betterbound}
Let $G$ be a connected cubic graph of order $n$. Then
$$\gamma(G) \leq \frac{7}{18} n + \frac{2}{3}.$$
\end{conjecture}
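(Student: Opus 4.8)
\textbf{Proof approach for Conjecture~\ref{conj:betterbound}.}

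The plan is to adapt the discharging-and-induction framework that proves Theorem~\ref{thm:maincubic}, but with a weight function tuned to the improved target $\frac{7}{18}n + \frac{2}{3}$ rather than $\frac{5}{12}n+\frac12$. The natural starting point is to guess the correct subcubic generalisation: assign weights $w_i$ to vertices of degree $i$ and correction terms for the cubic indicator $I$, seeking a bound of the form $\gamma(G) \le \frac{1}{36}\left(w_1 n_1 + w_2 n_2 + w_3 n_3 + cI + \cdots\right)$ that specialises to $\frac{7}{18}n + \frac23$ on cubic graphs and remains valid under the reduction operations. The extremal graphs $G_k$ suggest the right local invariant: each copy of $K_{3,3}^-$ together with its incoming/outgoing linking edges should contribute its fair share of $\frac{7}{3}$ matching edges per three copies, so the weights must be calibrated so that $K_{3,3}^-$-like gadgets are exactly tight. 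I would first pin down $w_1,w_2,w_3$ by demanding tightness on $K_{3,3}$, on $G_k$ for the cubic case, and on the small non-cubic extremal examples (the four-cycle, subdivided $K_{3,3}$).

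The main body of the argument would then proceed by minimum counterexample $G$, mirroring the case analysis sketched in the paper: reduce on bridges and small edge cuts, eliminate degree-$1$ and degree-$2$ vertices by local surgery that adds back one or two edges, and finally confront the $2$- and $3$-connected cubic case. For each reduction I would verify that the weight drop $\lb(G) - \lb(G')$ accounts for the at most one extra matching edge needed to extend a maximal matching of the reduced graph $G'$ back to $G$, exactly as in the proof of Theorem~\ref{thm:maincubic}. The novelty is that the tighter target leaves almost no slack, so the local configurations that must be reduced are larger and more numerous; in particular one cannot any longer "almost avoid analysing cubic graphs'' as the authors did for the $\frac{5}{12}$ bound.

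The hard part, and the reason this is only a conjecture, will be precisely that final highly-connected cubic case. For the $\frac{5}{12}$ bound there was enough room to dispose of it via a single clever global argument; for $\frac{7}{18}$ the tight examples $G_k$ are themselves cubic and $2$-connected, so the discharging must correctly distinguish the genuinely extremal $K_{3,3}^-$-chains from graphs that ought to do strictly better. I expect one would need to prove a structural dichotomy: either $G$ contains a reducible configuration (a short cycle, a small separator, or an induced subgraph on $O(1)$ vertices whose deletion saves enough weight), or $G$ decomposes into $K_{3,3}^-$-gadgets in the manner of $G_k$. Establishing that \emph{every} sufficiently connected cubic graph with no reducible configuration must have this rigid gadget structure is the crux, and it is the step I would expect to resist a short proof; it likely requires either a substantial discharging scheme on faces/local neighbourhoods or an entropy/counting argument bounding the number of matching edges forced within each radius-$2$ ball. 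Absent such a structural characterisation, one could still hope to lower the constant below $\frac{5}{12}$ incrementally by enlarging the list of reducible configurations, which is how I would first test whether $\frac{7}{18}$ is truly the threshold.
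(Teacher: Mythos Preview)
The statement you are addressing is Conjecture~\ref{conj:betterbound}, which the paper explicitly leaves \emph{open}: the authors write ``We suspect that $\rho_0=\frac{7}{18}$ and that moreover the following is true'' and provide no proof. There is therefore no argument in the paper to compare your proposal against.

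Your write-up is not a proof either, and you say as much: you outline a plausible strategy (recalibrate the weight function $\lb$, redo the minimum-counterexample analysis, and hope for a structural dichotomy in the $2$-connected cubic case), but you correctly identify the crux --- distinguishing the genuinely extremal $K_{3,3}^{-}$-chains from all other highly connected cubic graphs --- as unresolved. The authors make essentially the same point when discussing Conjecture~\ref{conj:K33}: the $\frac{I}{3}$ slack in $\lb(G)$ is precisely what let them avoid a deep analysis of cubic graphs, and removing that slack (which any bound below $\frac{5}{12}n$ would require) forces one to confront that case head-on. So your diagnosis of where the difficulty lies matches the paper's, but neither you nor the paper supplies the missing structural argument. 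As it stands, the proposal is a reasonable research plan, not a proof.
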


\begin{figure}[h]
\centering
\includegraphics[width=0.65\textwidth]{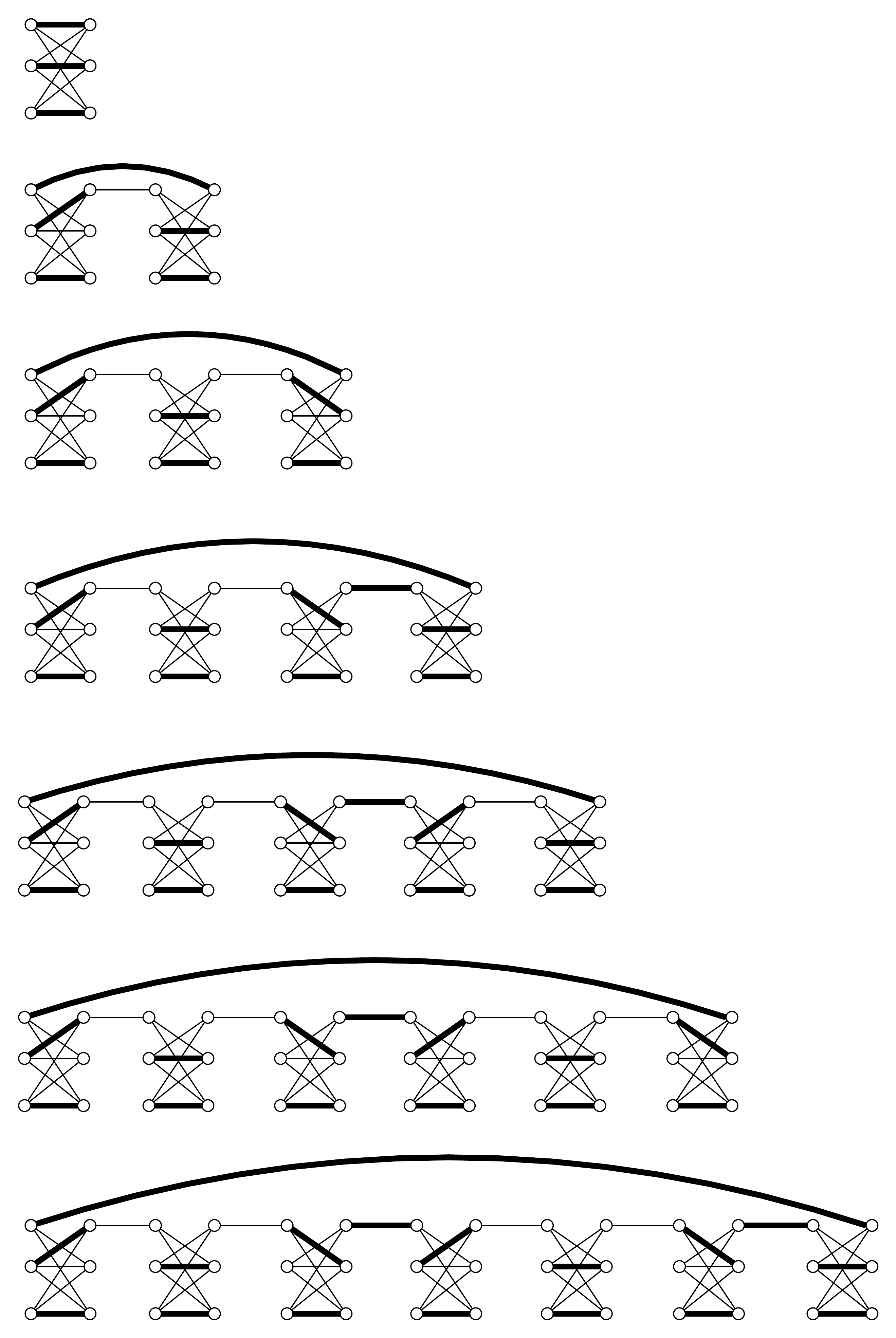}
\caption{Minimum maximal matchings of the graphs $G_1, G_2,\ldots, G_7$.}
\label{fig:spanningK33min}
\end{figure}

In a different direction, we believe the following minor strengthening of Theorem~\ref{thm:edges} is plausible. 

\begin{conjecture}\label{conj:K33}
Let $G$ be a connected subcubic graph with $n$ vertices and $m$ edges. Then $\gamma(G)\leq \frac{4n-m}{6}$ unless $G$ is isomorphic to the complete graph $K_4$ or the complete bipartite graph $K_{3,3}$.
\end{conjecture}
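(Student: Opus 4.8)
The plan is to prove Theorem~\ref{thm:maincubic} by considering a counterexample $G$ minimizing the number of vertices, and then to derive a contradiction by exhibiting a maximal matching that is too small. The overall strategy is a discharging-style reduction: whenever $G$ contains some local configuration, we delete a bounded set $V_0$ of at most nine vertices, possibly add one or two edges to repair degrees, apply the inductive hypothesis to the smaller connected subcubic graph $G'$, and then extend the returned maximal matching of $G'$ back to a maximal matching of $G$ whose size is controlled by $\lb(G)$. The function $\lb(G) = \tfrac{4n-m}{6} + \tfrac{2I+K-n_1}{6}$ is engineered precisely so that these local replacements never cost more than we gain; the arithmetic of tracking how $n$, $m$, $n_1$, and the indicators $I,K$ change under each reduction is exactly the bookkeeping that makes the induction close.

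First I would dispose of the base cases and the disconnection/bridge structure. Since $G$ is a minimum counterexample, it is neither $K_2$ nor $K_{3,3}$, and one checks the small graphs and any graph with a degree-$1$ vertex directly to establish both the bound and the stronger ``matching avoiding $uv$'' conclusion; the latter refinement is what lets reductions that create pendant edges be glued together without double-counting. Next I would show $G$ is $2$-edge-connected, or at least analyze bridges: if $G$ has a bridge $e=xy$, splitting along $e$ gives two subcubic pieces to which the inductive hypothesis (in its pendant-avoiding form) applies, and subadditivity of $\lb$ across the bridge yields the bound. Handling cut edges and small cut sets early is standard and removes the low-connectivity obstructions, so that the remaining analysis may assume $G$ is $2$-connected and, after eliminating degree-$1$ and degree-$2$ vertices via further reductions, essentially cubic.

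The substantive work is the local structural analysis of a highly connected (nearly) cubic $G$. Here I would enumerate short-cycle and neighborhood configurations — triangles, adjacent triangles, $4$-cycles, and small subgraphs like $K_4$ minus an edge or $K_{3,3}$ minus an edge attached to the rest of the graph — and for each design the reduction: choose $V_0$, specify which edges to add back so that $G'$ stays connected and subcubic, verify $G' \ne K_{3,3}$ (or handle the exceptional case separately), invoke the inductive matching on $G'$, and extend. For every configuration one must confirm the key inequality $|M| \le \lb(G)$ by comparing $\lb(G) - \lb(G')$ against the number of matching edges added during the extension, using the stronger pendant-avoiding hypothesis to control edges incident to the boundary of $V_0$. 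Because $K_{3,3}$ is the unique extremal graph, great care is needed whenever a reduction could produce $G' \cong K_{3,3}$; these borderline cases, together with the requirement to keep $G'$ connected and nonisomorphic to the forbidden graph, are where the tight constant $\tfrac{5}{12}$ is genuinely used.

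The hard part will be the final cubic (or essentially cubic) case, precisely because the paper's stated philosophy is to ``almost avoid'' analyzing cubic graphs: once all low-degree and short-cycle reductions are exhausted, one is left with a $2$-connected cubic graph of large girth, and there is no obvious vertex to delete cheaply. I expect the crux to be finding a reduction — perhaps deleting the closed neighborhood of a vertex (seven vertices) or an edge together with both endpoints' neighborhoods (up to nine vertices, matching the bound advertised in the efficiency discussion) and reconnecting the severed stubs by at most two new edges — for which the $\lb$-accounting still yields a net gain. The delicate point is that in a girth-free cubic graph the discharged vertices may have repeated or adjacent external neighbors, so one must case on the adjacency pattern among the neighbors of $V_0$ in $G - V_0$ (exactly the subroutine~(ii) flagged in the algorithmic discussion), and in each pattern certify both that a valid $G'$ exists and that the extension adds few enough matching edges. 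Establishing that this final reduction always applies, and does so without ever being forced into the extremal $K_{3,3}$ behavior, is the main obstacle and the technical heart of the argument.
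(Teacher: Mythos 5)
You have set out to prove the wrong statement. The statement you were given is Conjecture~\ref{conj:K33}, which the paper does \emph{not} prove --- it is posed as an open problem, and the paper explicitly warns that the method you describe does not reach it. Your proposal is instead a (reasonably faithful) reconstruction of the paper's proof of Theorem~\ref{thm:maincubic}: minimum counterexample, direct handling of degree-$1$ vertices with the pendant-avoiding refinement, a bridge lemma, reductions deleting a set $V_0$ of at most nine vertices with one or two edges added back, and a device (Lemma~\ref{lem:technicallemma} in the paper) guaranteeing the reduced graph has no cubic component. But that machinery only yields $\gamma(G)\leq \frac{4n-m}{6}+\frac{2I+K-n_1}{6}$, which for a cubic graph is $\frac{4n-m}{6}+\frac{1}{3}$ --- strictly weaker than the conjecture's $\frac{4n-m}{6}$.

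The gap is concrete and arithmetic, and it sits exactly where you predicted ``the hard part'' would be. In the paper's finishing blow, once $G$ is known to be cubic, one deletes two adjacent vertices $V_0=\{u_1,u_2\}$ and adds one edge, giving $\delta_n=2$, $\delta_m=4$, $\delta_\gamma\leq 1$; the inequality closes only because passing from a cubic graph to a non-cubic $G'$ makes $\delta_I=1$, contributing the extra $\frac{2}{6}$ so that $\delta_{\lb}=\frac{4\cdot 2-4+2}{6}=1\geq\delta_\gamma$. If the potential function is the conjectured $\frac{4n-m}{6}$, with no $I$-term, the same reduction gives only $\frac{4\cdot 2-4}{6}=\frac{2}{3}<1$, and the induction does not close; the same failure occurs in the common-neighbour subcase, which additionally leans on $\delta_{n_1}\leq -1$. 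The paper states this plainly: the ``shortcut'' of almost avoiding cubic graphs ``does not seem possible for an attack on Conjecture~\ref{conj:K33}.'' Your sketch gestures at analyzing short-cycle configurations in a $2$-connected cubic graph of large girth, but supplies no reduction whose accounting works without the $\frac{1}{3}$ slack --- and it would also have to isolate $K_4$ as a second exceptional graph, which your plan never does (for $K_4$ one has $\gamma=2>\frac{10}{6}$, so some reduction in any purported proof must be blocked precisely there). What is needed, and what nobody yet has, is a genuine structural analysis of cubic configurations with strictly better local trade-offs; reproducing the paper's induction cannot deliver the conjecture.
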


Despite the small gap with Theorem~\ref{thm:edges}, we would not be surprised if considerable more work is required to prove Conjecture~\ref{conj:K33}. This is because in the current paper we were able to almost avoid the analysis of cubic graphs. 
In particular, once we have deduced that our minimum counterexample to Theorem~\ref{thm:maincubic} is cubic, we are done in only a few extra lines, due to the extra room in our bound arising from the additive factor $\frac{I}{3}$.
 This `shortcut' approach turned out to be fruitful but does not seem possible for an attack on Conjecture~\ref{conj:K33}.

\textbf{Some remarks on maximal independent sets and line graphs}

Before going to the proof of Theorem~\ref{thm:maincubic}, we would like to point out that the problem of determining $\gamma(G)$ can be viewed as a special case of a more general problem. These remarks can be freely skipped.

First some definitions. An \emph{independent set} of a graph is a set of vertices that are pairwise non-adjacent. A \emph{maximal independent set} is an independent set that cannot be extended to a larger one. Correspondingly, the \emph{independent domination number} $i(G)$ of a graph $G$ is the size of the smallest maximal independent set of $G$. The well-studied \emph{domination number} $\gamma_v(G)$ of $G$ is the size of a smallest subset of $V(G)$ such that every vertex of $G$ is either in or adjacent to it. Finally, the \emph{line graph} $L(G)$ of a graph $G$ is the graph on vertex set $E(G)$ that has an edge between $e_1,e_2\in E(G)$ if and only if $e_1$ and $e_2$ are adjacent in $G$. 

In this terminology, the edge domination number of $G$ can be rewritten as $\gamma(G)= i(L(G))$. Furthermore, for claw-free graphs (and hence for line graphs) it is known~\cite{AllanLaskar78} that the domination number and independent domination number are equal, so that in fact $\gamma(G)= \gamma_v(L(G))$.

The line graph $L(G)$ of a cubic graph $G$ is $4$-regular and has $3|V(G)|/2$ vertices.
Therefore an equivalent statement of Theorem~\ref{thm:vertices} is that for every ($4$-regular) line graph $L(G)$ of a connected cubic graph $G$, one has $\gamma_v(L(G)) \leq \frac{5\cdot |V(L(G))|}{18}+ \frac{1}{2}$, with equality if and only if $L(G)$ is the line graph of $K_{3,3}$.
One may wonder to what extent the condition of being a line graph is essential for this bound. Often problems on claw-free graphs actually reduce to the case of line graphs~(e.g. see \cite{CaKa19} for a somewhat similar problem where this was the case), so it is natural to expect that more generally $\gamma_v(H) \leq \frac{5 \cdot |V(H)|}{18}+ \frac{1}{2}$ holds for every $4$-regular connected claw-free graph $H$. Assuming Conjecture~\ref{conj:betterbound}, even the stronger bound $\gamma_v(H)\leq\frac{7\cdot |V(H)|}{27}+ \frac{2}{3}$ can be expected. A proof of such a generalization would be nice. 

To put this into perspective, let us now review what is known for a connected graph $H$ on $n$ vertices that is not necessarily claw-free. Then $\gamma_v(H)\leq \frac{3n}{8}$ if $H$ has minimum degree at least three~\cite{Reed96}, $\gamma_v(H)\leq \frac{5n}{14}$ if $n>8$ and $H$ is cubic~\cite{KostochkaStocker09}, and $\gamma_v(H)\leq \frac{4n}{11}$ if $H$ is $4$-regular~\cite{LiuSun04}. 
The first of these two bounds are attained, while for the third bound it is not clear from the literature to what extent it is best possible.
On the other hand, the parameter $i(\cdot)$ can attain much higher values than $\gamma(\cdot)$, as demonstrated by the balanced complete bipartite graphs. There also exists a $4$-regular graph $H$ on $n=14$ vertices with $i(H) =\frac{3n}{7}$ and it is conjectured~\cite{GH13} that this is best possible among all $4$-regular graphs other than $K_{4,4}$.
For cubic graphs more is known in this respect.  For instance, if $H$ is an $n$-vertex connected cubic graph other than $K_{3,3}$ then $i(H)\leq \frac{2n}{5}$, as proved in~\cite{LSS99}, and it is conjectured~\cite{GH13} that $i(H)\leq \frac{3n}{8}$ up to two exceptions, which would be sharp due to infinitely many graphs.
For various other results regarding $\gamma_v(\cdot)$ and $i(\cdot)$, we refer the reader to the survey~\cite{GH13} and citing papers. 

\section{A technical lemma that helps to avoid cubic graphs}
 
 The following technical lemma will be used a few times in the proof of Theorem~\ref{thm:maincubic}. It allows us to only apply `induction' (between quotation marks because formally we will proceed by minimum counterexample) to non-cubic graphs, which is of considerable benefit due to the factor $I/3$ in our definition of $\lb(G)$. 

\begin{lemma}\label{lem:technicallemma}
Let $G$ be a bridgeless connected subcubic graph. Let $V_0\subseteq V(G)$. Let $N$ denote the set of neighbours of $V_0$ that are not in $V_0$. Let $(N,E^*)$ be another graph on $N$ of which each component is either a singleton or contains at least three vertices. Suppose furthermore that there is at least one edge in $E^*$ which is not an edge of $G$. Then there exists $e\in E^*$ such that $e \notin E(G)$ and the graph $G-V_0+e$ has no cubic component (and is subcubic).
\end{lemma}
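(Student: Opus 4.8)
The plan is to argue by contradiction: assume that \emph{every} edge $e\in E^*$ with $e\notin E(G)$ is \emph{bad}, meaning that $G-V_0+e$ contains a cubic component, and derive a contradiction. Before the heart of the matter I would record two easy facts. First, $G-V_0+e$ is always subcubic: each vertex of $N$ is adjacent to $V_0$, hence loses at least one edge when $V_0$ is deleted and so has degree at most $2$ in $G-V_0$; adding the single edge $e$ (whose endpoints lie in $N$, since $E^*$ is a graph on $N$) raises two such degrees to at most $3$ and changes no other degree. Second, $G-V_0$ itself has \emph{no} cubic component: since $E^*$ has an edge we have $N\neq\emptyset$ and hence $V_0\neq\emptyset$, and because $G$ is connected every component $D$ of $G-V_0$ must send an edge to $V_0$ (there are no edges of $G$ between distinct components of $G-V_0$), so $D$ contains a vertex of $N$, which has degree at most $2$. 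Consequently the only component of $G-V_0+e$ that can be cubic is the component $C_e$ containing the newly added edge $e=xy$.

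Next I would pin down exactly what a bad edge looks like. Every vertex of $C_e$ other than $x,y$ retains its $(G-V_0)$-degree, and every vertex of $N$ has degree at most $2$ in $G-V_0$; hence if $C_e$ is cubic then its only vertices of $N$ are $x$ and $y$, and moreover $x,y$ must have had degree exactly $2$ in $G-V_0$. Since $x\in N$ sends at least one edge to $V_0$, the identity $\deg_{G-V_0}(x)=\deg_G(x)-|\{\text{edges from }x\text{ to }V_0\}|=2$ forces $\deg_G(x)=3$ with exactly one edge from $x$ to $V_0$, and likewise for $y$. This leaves two cases: either $x,y$ lie in a common component $D$ of $G-V_0$ with $D\cap N=\{x,y\}$ (type A), or they lie in distinct components $D_x,D_y$ with $D_x\cap N=\{x\}$ and $D_y\cap N=\{y\}$ (type B).

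The bridgeless hypothesis is exactly what eliminates type B. If $D_x\cap N=\{x\}$, then the only edges of $G$ leaving the component $D_x$ are the edges from $x$ to $V_0$; but the degree count above produces just one such edge, so that edge is a bridge, contradicting that $G$ is bridgeless. Hence every bad edge is of type A.

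Finally I would exploit the structure of $E^*$ to rule out type A as well, which I expect to be the main obstacle. Pick a candidate edge $e=xy$ (one exists by hypothesis); it is bad of type A, so $x,y$ lie in a common component $D$ of $G-V_0$ with $D\cap N=\{x,y\}$. Since $xy\in E^*$, the $E^*$-component of $x,y$ is not a singleton, so by hypothesis it has at least three vertices; as this component is connected and has a vertex outside $\{x,y\}$, some edge $f=uw\in E^*$ satisfies $u\in\{x,y\}$ and $w\in N\setminus\{x,y\}$. I would then examine $f$. If $f\in E(G)$, then since $u,w\in N\subseteq V(G)\setminus V_0$ the edge $f$ survives in $G-V_0$, placing $w$ in the same component $D$ as $u$, so $w\in D\cap N=\{x,y\}$, a contradiction. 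If instead $f\notin E(G)$, then $f$ is a bad edge, necessarily of type A (type B being impossible), so $u,w$ lie in a common component of $G-V_0$ whose intersection with $N$ equals $\{u,w\}$; but that component is again $D$ (as $u\in D$), forcing $\{u,w\}=D\cap N=\{x,y\}$ and again contradicting $w\notin\{x,y\}$. Either way we reach a contradiction, so some candidate edge is good. The crux is the interplay of the two hypotheses: bridgelessness eliminates the two-component bad edges, while the ``no component of size exactly two'' condition on $E^*$ produces the extra neighbour $w$ that breaks the rigid structure $D\cap N=\{x,y\}$ of any surviving bad edge.
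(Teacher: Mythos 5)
Your proof is correct and uses the same two ingredients, deployed in the same two places, as the paper's own argument: bridgelessness eliminates the configuration where the added edge joins two components of $G-V_0$ each meeting $N$ only in an endpoint of that edge (your type B, corresponding to the paper's case $A(x_1)\neq A(x_2)$), and the hypothesis that every non-singleton component of $(N,E^*)$ has at least three vertices supplies a third vertex $w\in N$ whose incident $E^*$-edge breaks the rigid structure $D\cap N=\{x,y\}$ of the remaining one-component configuration (your type A, the paper's case $A(x_1)=A(x_2)$). The only difference is presentational: the paper makes a single extremal choice of edge, preferring endpoints in distinct components of $G-V_0$, and verifies it directly, whereas you assume every candidate edge fails, classify the failures, and derive a contradiction -- a valid repackaging of the same argument.
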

\begin{proof}
For each $x\in N$, let $A(x)$ denote the component of $G-V_0$ that contains $x$. Let $x_1x_2\in E^*$ be such that $x_1x_2\notin E(G)$. If possible, we choose $x_1,x_2$ such that additionally $A(x_1)\neq A(x_2)$.   

Note that $N-\left\{x_1,x_2\right\}$ is non-empty, because by assumption, the component of $(N,E^*)$ containing $\left\{x_1,x_2\right\}$ has at least three vertices.

Consider the reduced graph $G':=G-V_0+x_1x_2$ and let $x^*\in N- \left\{x_1,x_2\right\}$. 
Since $x^*$ has a neighbour in $V_0$ in the graph $G$, it has degree less than three in $G'$ and so the component of $x*$ in $G'$ is not cubic.

It remains to show that the component $C$ of $G'$ containing the edge $x_1x_2$ is not cubic. Note that $C$ is the union of $A(x_1)$ and $A(x_2)$ and the edge $x_1x_2$. We may assume that in $G$, the vertex $x_1$ has three neighbours of which exactly one is in $V_0$ (otherwise $x_1$ would have degree less than three in $G'$, implying that $C$ is not cubic); let $v_1$ be the neighbour of $x_1$ that is in $V_0$.

If $A(x_1)\neq A(x_2)$ then the facts that $G$ is connected and bridgeless (in particular $x_1v_1$ is not a bridge) imply that $A(x_1)=A(x^*)$ for some $x^* \in N-\left\{x_1,x_2\right\}$.
On the other hand, if $A(x_1)=A(x_2)$, then by the choice of $x_1$ and $x_2$ we have that $A(x_1)=A(x)$ for all vertices $x \in N$ that belong to the component of $x_1$ in the graph $(N,E^*)$. By assumption, the component of $(N,E^*)$ containing $x_1$ has at least three vertices, so again we obtain $A(x_1)=A(x^*)$ for some $x^* \in N-\left\{x_1,x_2\right\}$.
 In both cases $x^*\in A(x_1)\subseteq C$ and so $C$ is not cubic. 
\end{proof}

Although we do not need it in this paper, note that for any $\Delta\geq 3$, Lemma~\ref{lem:technicallemma} and its proof can be generalised by replacing `cubic' with `$\Delta$-regular' and `subcubic' with `maximum degree at most $\Delta$'.

\section{Proof of the main theorem}

In this section, we prove Theorem~\ref{thm:maincubic}. Let $G$ be a hypothetical counterexample to Theorem~\ref{thm:maincubic} that minimises  $|V(G)|$. We proceed by a series of lemmas that describe properties of $G$ which ultimately lead to a contradiction.

In various cases, we will delete a set of vertices from $G$  and possibly add some edges, and then use that the resulting graph $G'$ is not a counterexample to Theorem~\ref{thm:maincubic}. 
 When the graph $G'$ is clear from the context, we let $n', m'$ and $n_1^{'}$ denote the number of  vertices, edges respectively degree-$1$ vertices of $G'$. Furthermore, $I'$ will denote the number of cubic components of $G'$ and $K'$ will denote the number of components of $G'$ that are isomorphic to $K_2$. We then write $\delta_{n'}:=n-n'$, $\delta_{m'}:=m-m'$, $\delta_{n_1}:= n_1-n_1^{'}$ and $\delta_{I}:= I- I'$  and $\delta_{K}:= K- K'$ for the respective differences.

Our main task in deriving a contradiction is to show that $\gamma(G)\leq\lb(G)$. (In fact, this will directly yield the desired contradiction once we know that $G$ has no degree-$1$ vertex.) Since $\gamma(G') \leq \lb (G')$ as $G'$ is not a counterexample, this inequality follows if $\gamma(G)-\gamma(G') \leq  \lb(G) - \lb(G')$.
To conclude that $\gamma(G)\leq \lb(G)$, it will therefore always be sufficient to show that
\begin{equation}\label{ineq:goal}
\delta_{\lb} \geq \delta_{\gamma},
\end{equation}
where
$$\delta_{\lb}:= \lb(G)-\lb(G')=\frac{4 \delta_n -\delta_{m} - \delta_{n_1}  +\delta_{K} + 2\delta_{I}}{6} $$
and
$$\delta_{\gamma}:=\gamma(G)-\gamma(G').$$

\textbf{Remark}
\emph{Since $G$ is connected, whenever only vertices are deleted (and no edge added) to construct $G'$ from $G$, then $G'$ cannot have any cubic component. 
Hence in most of our applications, $\delta_{I}$ will be $0$.}

\textbf{Remark}
\emph{ At some parts in the proof of Theorem~\ref{thm:maincubic}, we deduce that $G$ must be a specific graph on at most nine vertices. 
 In those cases it is easily seen that $G$ is not a counterexample to Theorem~\ref{thm:maincubic}, so we have decided to not explicitly specify a matching certifying this. We remark that we have also performed a small computer search to double-check that indeed no graph on nine vertices or less is a counterexample to Theorem~\ref{thm:maincubic}.}

The bulk of the proof is focused on the structure around vertices of degree smaller than three, starting with the following lemma.

\begin{lemma}\label{lem:nodegreeone}
$G$ has minimum degree at least $2$.
\end{lemma}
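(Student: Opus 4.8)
The plan is to proceed by contradiction, assuming that the minimum counterexample $G$ has a vertex $v$ of degree $1$, and to show that we can then find a maximal matching of $G$ of size at most $\lb(G)$ avoiding the edge at $v$, contradicting minimality. Let $v$ be a degree-$1$ vertex with neighbour $u$. First I would dispose of the trivial case $G\cong K_2$, which is explicitly excluded by the hypotheses of the ``moreover'' clause. So we may assume $u$ has further neighbours besides $v$.

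The key idea is to delete $v$ (and perhaps $u$) to obtain a smaller graph $G'$, apply the inductive hypothesis to $G'$ to get a maximal matching $M'$, and then extend $M'$ to a maximal matching $M$ of $G$ while carefully tracking the inequality~\eqref{ineq:goal}, namely $\delta_{\lb}\geq\delta_{\gamma}$. I expect the natural move to be: let $G'=G-\{u,v\}$ so that we remove the pendant edge $uv$ together with $u$. When $u$ has degree $2$ or $3$ in $G$, deleting $u$ lowers the degrees of $u$'s other neighbours, possibly creating new low-degree vertices, and we will match $u$ to $v$ in $G$ (adding one edge to $M'$, so $\delta_\gamma\leq 1$), which by construction avoids using any edge that $M'$ already committed to and keeps $M$ maximal since every edge formerly incident to $u$ is now dominated by the edge $uv$. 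The bookkeeping then reduces to checking that $\delta_{\lb}=\tfrac{4\delta_n-\delta_m-\delta_{n_1}+\delta_K+2\delta_I}{6}\geq 1$, using $\delta_n=2$, $\delta_m=\deg(u)$, and the changes $\delta_{n_1}$ in the count of degree-$1$ vertices. Because $G$ is connected and not $K_2$, deleting vertices cannot create a cubic component, so $\delta_I=0$ by the first Remark, which simplifies the calculation.

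The main obstacle will be two-fold. First, the ``moreover'' clause requires that the matching we produce avoids the pendant edge $uv$ when the degree-$1$ vertex is handed to us; so if the inductive hypothesis is applied to a graph $G'$ that itself has a degree-$1$ vertex, I must invoke the strengthened conclusion to obtain a matching of $G'$ that avoids the relevant pendant edge, and then verify that this nested avoidance is compatible with extending to $G$. This is exactly why the theorem is stated with the auxiliary clause: the inductive structure forces us to carry it through. Second, I expect delicate sign-chasing in $\delta_{n_1}$: removing $u$ may turn some neighbour of $u$ into a new degree-$1$ vertex (increasing $n_1'$, hence decreasing $\delta_{n_1}$, which helps us) or may delete an existing degree-$1$ neighbour of $u$ (the opposite). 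One must also ensure $G'$ stays connected, or else handle the components separately by summing $\lb$ over components; this is where Lemma~\ref{lem:technicallemma} may be invoked to re-add an edge between two neighbours of the deleted set so as to avoid creating a cubic component while keeping connectivity under control.

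To finish, I would enumerate the cases by $\deg(u)\in\{2,3\}$ and, within each, by how many neighbours of $u$ are themselves of low degree, checking~\eqref{ineq:goal} in each case; the delicate subcase is $\deg(u)=3$ with all of $u$'s neighbours already of degree $2$, since there $\delta_m=3$ is largest and the slack in $\delta_{\lb}$ is smallest, so I expect that case to require either adding a compensating edge via Lemma~\ref{lem:technicallemma} or arguing directly that the small resulting graph is one of the finitely many configurations already verified by the computer check mentioned in the second Remark. In every case the conclusion is $\gamma(G)\leq\lb(G)$ via a matching avoiding $uv$, contradicting that $G$ is a counterexample and therefore establishing that $G$ has minimum degree at least $2$.
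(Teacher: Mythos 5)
There is a genuine gap, and it is twofold. First, your concrete construction takes $G'=G-\{u,v\}$ and extends $M'$ by the pendant edge $uv$ itself. But since $G$ is a vertex-minimum counterexample with a degree-$1$ vertex, the contradiction you must reach is precisely the \emph{moreover} clause of Theorem~\ref{thm:maincubic}: you need a maximal matching of size at most $\lb(G)$ that \emph{avoids} $uv$. A matching containing $uv$ at best shows $\gamma(G)\leq\lb(G)$, which does not contradict a counterexample that fails only the avoidance clause; your closing sentence claims the matching avoids $uv$, which your construction visibly does not, so the proposal is internally inconsistent on exactly the point you identified as the obstacle. Second, the bookkeeping for the two-vertex deletion fails in the case you flag as delicate, and irreparably so: if $\deg(u)=3$ and the two neighbours of $u$ other than $v$ have degree $3$, then no new degree-$1$ vertices arise, so $\delta_n=2$, $\delta_m=3$, $\delta_{n_1}=1$ (only $v$ is removed from the count), $\delta_K=\delta_I=0$, and
$$\delta_{\lb}=\frac{4\cdot 2-3-1+0+0}{6}=\frac{2}{3}<1=\delta_{\gamma},$$
violating inequality~(\ref{ineq:goal}). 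Your proposed rescue via Lemma~\ref{lem:technicallemma} is unavailable here: that lemma requires $G$ to be bridgeless, whereas the pendant edge $uv$ is itself a bridge (and bridgelessness, Lemma~\ref{lem:nobridge}, is established only \emph{after} the present lemma, using its conclusion). The nested-avoidance machinery you describe is likewise not needed at this stage; it is used later, in the bridge analysis.

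The paper resolves both problems with a single different cut. Since a star on at most four vertices is not a counterexample, the support vertex $u$ has a neighbour $w\neq v$ with $\deg(w)\geq 2$. One deletes all three vertices, setting $G'=G-\{u,v,w\}$, and extends a minimum maximal matching $M'$ of $G'$ by the single edge $uw$. The matching $M'+uw$ leaves $v$ unmatched but dominates its unique edge $uv$, hence is maximal, and it avoids $uv$ --- delivering exactly the contradiction with the moreover clause. The larger deletion also supplies the missing slack: $\delta_n=3$, $\delta_m\leq 5$, and, since every $K_2$-component of $G'$ contains a vertex of degree greater than $1$ in $G$, one has $\delta_{n_1}\leq 1+\delta_K$, whence
$$\delta_{\lb}\geq\frac{4\cdot 3-5-(1+\delta_K)+\delta_K+0}{6}=1\geq\delta_{\gamma}.$$
Note that $M'$ can be arbitrary (no avoidance condition on $G'$ is needed, as $u,w\notin V(G')$), and $\delta_I=0$ by the paper's first Remark because only vertices are deleted. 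So the correct move is not to match the pendant vertex but to match its support into the graph and leave the pendant vertex exposed.
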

\begin{proof}
If there is a vertex of degree $0$, then since $G$ is connected, $G$ must be a singleton, which is not a counterexample to Theorem~\ref{thm:maincubic}.
 Next, suppose that $G$ has a vertex $u$ of degree $1$. Let $v$ be the unique neighbour of $u$. Since a star on at most four vertices is not a counterexample either, $v$ must have a neighbour $w$ (distinct from $u$) of degree $\geq 2$. Then consider the graph $G':= G- \left\{u,v,w\right\}$. 
 Let $M'$ be a minimum maximal matching of $G'$.
Then $M:= M' + vw$ is a maximal matching of $G$ of order $\gamma(G')+1$, so $\delta_{\gamma} \leq 1$.  We have  that
$\delta_{n} = 3$ and $\delta_{m} \leq 5$. 
Furthermore, since $G$ is connected, every component of $G'$ that is isomorphic to $K_2$ contains a vertex that has degree larger than $1$ in $G$, and hence we have $\delta_{n_1} \leq 1+\delta_{K}$.

Thus
 $$ \delta_{\mu}=\frac{4 \delta_n - \delta_m-  \delta_{n_1} +\delta_{K} +2\delta_{I} }{6} \geq\frac{4 \cdot 3 -5 - (1+\delta_{K}) +\delta_{K} + 0}{6} = 1 \geq \delta_{\gamma},$$

so $M$ must be a maximal matching of $G$ of size at most $\lb(G)$. Because $M$ avoids $uv$, we obtain the desired contradiction.
\end{proof}

Note that by Lemma~\ref{lem:nodegreeone}, $G$ has no degree-$1$ vertex, so from now on \emph{it suffices to derive (\ref{ineq:goal}) to arrive at a contradiction}.

Lemma~\ref{lem:nodegreeone} also implies $n_1=0$, so $\delta_{n_1}=2 \delta_{K}\leq 0$, so $-\delta_{n_1}+\delta_K=\delta_K \geq 0$. Therefore we know from now on that $\delta_{\lb} \geq \frac{4  \delta_n - \delta_m + 2 \delta_{I}}{6}$. Moreover, in all our forthcoming constructions of a reduced graph $G'$, we will make sure that $G'$ does not have any cubic component, so that in fact we will always have
\begin{equation}
\delta_{\lb} \geq \frac{4  \delta_n - \delta_m}{6}.
\end{equation}
This simplifies our computations ever so slightly.

\begin{lemma}\label{lem:nobridge}
$G$ has no bridge
\end{lemma}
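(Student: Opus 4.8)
The plan is to argue by contradiction. Suppose $uv$ is a bridge of $G$, and let $G_1,G_2$ be the two components of $G-uv$, with $u\in V(G_1)$ and $v\in V(G_2)$. By Lemma~\ref{lem:nodegreeone} every vertex of $G$ has degree $2$ or $3$, so $u$ has degree $1$ or $2$ in $G_1$ (and likewise $v$ in $G_2$). Each $G_i$ is therefore a connected subcubic graph that is not $K_{3,3}$ (a bridge endpoint has degree $<3$), and since $v$ has degree $\geq 2$ in $G$ the component $G_2$ has at least two vertices, whence $|V(G_1)|<|V(G)|$ and symmetrically. Thus each $G_i$ obeys Theorem~\ref{thm:maincubic}: we have $\gamma(G_i)\leq\lb(G_i)$, and whenever a bridge endpoint has degree $2$ in $G$ it becomes a degree-$1$ vertex of its component, so the moreover clause supplies a maximal matching of $G_i$ of size at most $\lb(G_i)$ in which that endpoint is exposed (and its unique neighbour covered).

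The combinatorial engine is the following: if $M_i$ is a maximal matching of $G_i$ for $i=1,2$, then $M_1\cup M_2$ is a maximal matching of $G$ \emph{unless} both $u$ and $v$ are exposed, in which case $M_1\cup M_2\cup\{uv\}$ is maximal. Since passing from $G$ to the pair $(G_1,G_2)$ only loses the bridge edge, a direct computation shows that $\lb(G_1)+\lb(G_2)-\lb(G)$ equals $\tfrac16$ times $\big(1 - (\text{number of bridge endpoints of degree }2) - 2I\big)$, where the subtracted endpoint terms record exactly the degree-$1$ vertices that deleting the bridge creates in the $G_i$. Consequently, if one can produce maximal matchings $M_i$ of $G_i$, each of size at most $\lb(G_i)$, such that at least one of $u,v$ is covered (so no bridge edge is needed), then $\gamma(G)\leq\lb(G_1)+\lb(G_2)$, and this is at most $\lb(G)$ in every case except $\deg u=\deg v=3$ with $G$ not cubic.

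I would dispatch the favourable cases as follows. If some endpoint, say $u$, has degree $2$, then $G$ is non-cubic ($I=0$) and the moreover clause exposes $u$ within $\lb(G_1)$; pairing this with a minimum maximal matching of $G_2$ (covering $v$, or else using the $-\tfrac16$ slack from the two induced degree-$1$ vertices to afford the bridge) gives $\gamma(G)\leq\lb(G)$. If $\deg u=\deg v=3$ and $G$ is cubic ($I=1$), the additive $\tfrac{2I}{6}$ term in $\lb(G)$ absorbs the split, so $\gamma(G_i)\leq\lb(G_i)$ with either endpoint covered already yields $\gamma(G)\leq\lb(G)$. In each such case we contradict the choice of $G$.

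The main obstacle is twofold. First, the argument needs a \emph{covering} matching of one component within its $\lb$ bound, whereas the moreover clause only guarantees cheap \emph{exposure}; re-routing an exposing matching to cover an endpoint may force an extra edge, so I expect to need a short separate analysis (or to exploit minimality) to secure coverage on the appropriate side. Second, and genuinely harder, is the case $\deg u=\deg v=3$ with $G$ not cubic, where the clean two-component bound is short by exactly $\tfrac16$ and no endpoint has the degree-$1$ status that makes exposure free. Here I would replace the split by a bespoke local reduction: delete $\{u,v\}$, recover the lost edges by adding one or two edges among their freed neighbours, and extend a minimum maximal matching of the reduced graph using the bridge $uv$; the delicate point is to choose the added edges so that the reduced graph acquires neither a cubic component nor a $K_{3,3}$ component (which would forbid applying the theorem). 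Note that Lemma~\ref{lem:technicallemma} is stated for bridgeless graphs and so does not apply directly; instead one must use the presence of the bridge itself to control the component structure after deletion, and possibly invoke integrality of $\gamma(G)$ together with the minimality of $G$ to close the residual tight subcase.
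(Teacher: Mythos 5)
Your skeleton is the paper's skeleton — split at the bridge, apply Theorem~\ref{thm:maincubic} to the pieces, track the $\tfrac{2I+K-n_1}{6}$ bookkeeping — and your slack computation $\lb(G_1)+\lb(G_2)-\lb(G)=\tfrac{1-d_2-2I}{6}$ (with $d_2$ the number of degree-$2$ bridge endpoints) is correct. But two gaps keep the proposal from closing, and you have correctly sensed both without resolving either. First, \emph{coverage}: you never produce, within the $\lb$ budget, a maximal matching of a component that covers its bridge endpoint, and your fallback is arithmetically false — adding the bridge edge costs a full $+1$, which a slack of $\tfrac16$ (or $\tfrac26$ in the cubic case) cannot absorb. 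So even your ``favourable'' cases (an endpoint of degree $2$, or $G$ cubic) are not actually finished: a minimum maximal matching of the other component may expose its endpoint too. The paper's device is to apply the theorem not to $G_i$ but to $H_i:=G[V(G_i)\cup\{u_{1-i}\}]$, the component \emph{together with} the bridge and the far endpoint as a pendant vertex: the moreover clause yields a maximal matching of $H_i$ of size at most $\lfloor\tfrac{4(n_i+1)-(m_i+1)-1}{6}\rfloor$ that avoids $u_0u_1$, and maximality in $H_i$ then \emph{forces} $u_i$ to be covered (else the bridge could be added). This converts the clause's cheap exposure into the coverage you need, at a controlled cost of $+\tfrac26$, uniformly in all degree configurations.

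Second, the tight case. The paper closes everything — including your hard subcase $\deg u=\deg v=3$ with $G$ non-cubic — not by a bespoke local reduction but by the mod-$6$ floor analysis you only gesture at in your last sentence. Since $\gamma$ is an integer, every bound may be floored: if some $4n_i-m_i\not\equiv 0\pmod 6$, then the better of the two pairings (plain matching of one component, covering matching via $H_i$ of the other) saves at least $\tfrac36$ from rounding and gives $\gamma(G)\le\tfrac{4n-m}{6}\le\lb(G)$ with no case split on $d_2$ or $I$ at all; and if $4n_0-m_0\equiv 4n_1-m_1\equiv 0\pmod 6$, one deletes the endpoints and notes that $F_i:=G_i-u_i$ satisfies $4|V(F_i)|-|E(F_i)|\equiv 3$ or $4\pmod 6$, so each floor saves at least $\tfrac36$ and $R_0\cup R_1\cup\{u_0u_1\}$ has size at most $\tfrac{4n-m-3}{6}<\lb(G)$ — the two savings exactly pay for the bridge edge. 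Your sketched alternative (delete $\{u,v\}$, add edges among the freed neighbours, extend by $uv$) falls short by precisely the missing $\tfrac16$: with $\delta_n=2$, $\delta_m=3$ and $\delta_I\le 0$ (the case at hand is non-cubic) one gets $\delta_{\lb}\le\tfrac56<1\le\delta_{\gamma}$, and moreover a matching of the reduced graph using an added edge such as $v_{11}v_{12}$ cannot simply be kept when the bridge is restored. You are right that Lemma~\ref{lem:technicallemma} is unavailable here; the paper indeed never invokes it in this lemma — minimality, the moreover clause applied to the pendant extensions, and integrality suffice.
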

\begin{proof}
Suppose $G$ has a bridge $u_0u_1$. For $i\in \left\{0,1\right\}$, let $G_i$ be the component of $G-u_0u_1$ that contains $u_i$. Furthermore, let $H_i$ be the graph induced by $V(G_i) \cup \left\{u_{(1-i)}\right\}$.
Let $n_i:=|V(G_i)|$ and $m_i:=|E(G_i)|$. Observe that $n=n_0+n_1$ and $m=m_0+m_1+1$.

Because $G$ has no degree-$1$ vertex, we have $n_0,n_1>1$. Therefore $H_i$ has fewer vertices than $G$ and hence satisfies Theorem~\ref{thm:maincubic}. For the same reason, $H_i$ cannot be isomorphic to $K_2$. As $u_{(1-i)}$ is a degree-$1$ vertex of $H_i$, it follows that $H_i$ has a maximal matching $M_i$ that avoids $u_0u_1$ and has size at most $\lb(H_i) \leq \lfloor \frac{4(n_i+1) - (m_i+1)-1}{6} \rfloor$.

On the other hand, because $G_{i}$ satisfies Theorem~\ref{thm:maincubic} as well, $G_i$ has a maximal matching $N_{i}$ of size at most $\lb(G_i) \leq \lfloor  \frac{4n_{i} -m_{i}}{6}\rfloor$. So $\Gamma_i:= N_{1-i} \cup M_{i}$ is a maximal matching of $G$ of size at most $\lfloor  \frac{4n_{1-i} -m_{1-i}}{6}\rfloor + \lfloor \frac{4(n_i+1) - (m_i+1)-1}{6} \rfloor$. Considering the minimum of $|\Gamma_0|$ and $|\Gamma_1|$, we obtain that $G$ has a maximal matching of size at most 

\begin{equation}\label{eq:minimum}
\min \left( \lfloor \frac{4n_1 - m_1}{6} \rfloor+ \lfloor \frac{4n_0 - m_0+2}{6} \rfloor, \lfloor \frac{4n_0 - m_0}{6} \rfloor+ \lfloor \frac{4n_1 - m_1+2}{6} \rfloor \right).
\end{equation}

If at least one of $4n_0-m_0$ and $4n_1-m_1$ does \emph{not} equal $0$ mod $6$, then $\max((4n_0-m_0) \text{ mod } 6, (4n_0-m_0+2) \text{ mod } 6, (4n_1-m_1) \text{ mod } 6, (4n_1-m_1+2) \text{ mod } 6) \geq 3$, so due to the rounding in expression (\ref{eq:minimum}), there is a maximal matching of size at most
\begin{equation*}
\frac{4n_1-m_1 + 4n_0-m_0+2}{6}  - \frac{3}{6}=\frac{4(n_0+n_1)-(m_0+m_1+1)}{6} = \frac{4n-m}{6} \leq \lb(G).
\end{equation*}

Thus $4n_0-m_0=4n_1-m_1= 0 $ mod $6$. Next, for each $i \in \left\{0,1\right\}$, we consider the graph $F_i:= G_i-\left\{u_i\right\}$.
Note that $G_i$ has one more vertex and (depending on the degree of $u_i$) either one or two more edges than $F_i$. So $|V(F_i)|=n_i-1$ while $|E(F_i)|$ either equals $m_i-1$ or $m_i-2$. It follows that 
 $4\cdot |V(F_i)|-|E(F_i)|$  is either $3$ mod $6$ or  $4$ mod $6$.
Hence $F_i$ has a maximal matching $R_i$ of size at most $\lb(F_i)\leq \lfloor \frac{4|V(F_i)| - |E(F_i)| }{6}\rfloor \leq \frac{4|V(F_i)| - |E(F_i)| -3}{6}$. We conclude that $G$ has a maximal matching $R_1+R_2+u_0u_1$ of size at most $\frac{4(|V(F_1)|+|V(F_2)|) - (|E(F_1)|+|E(F_2)|)-2\cdot 3}{6}+1 \leq  \frac{4(n-2)-(m-5)}{6}=\frac{4n-m-3}{6} < \lb(G)$, a contradiction.
\end{proof}

\begin{figure}[h]
\centering
\includegraphics[width=0.55\textwidth]{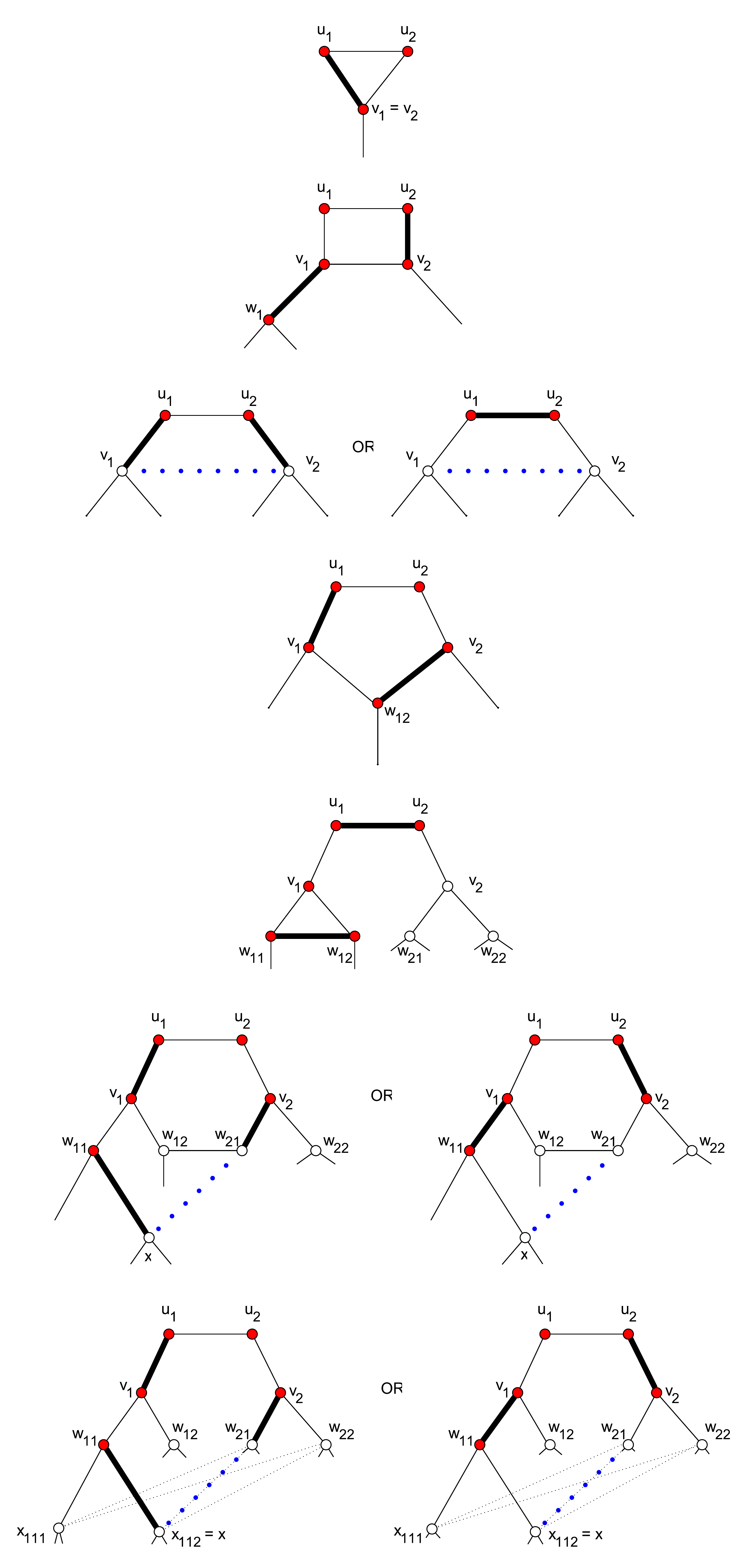}
\caption{The case analysis in Lemma~\ref{lem:noadjacentdegreetwos}, from top to bottom. In each case, to obtain the reduced graph $G'$, the red vertices are removed and the blue dotted line is added. The fat edges extend the maximal matching from $G'$ to $G$. The small-dotted lines in the final case represent $E^*$, the vertex pairs of which at least one is not an edge.}
\label{fig:Lemma11}
\end{figure}

\begin{lemma}\label{lem:noadjacentdegreetwos}
There are no two adjacent degree-$2$ vertices in $G$.
\end{lemma}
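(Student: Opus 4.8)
The plan is to assume $G$ contains adjacent degree-$2$ vertices $u$ and $v$, to let $a$ be the other neighbour of $u$ and $b$ the other neighbour of $v$, and to derive a contradiction by producing a reduced graph $G'$ with $\delta_{\lb}\ge\delta_{\gamma}$. First I would dispose of the degenerate configuration $a=b$: then $u,v,a$ form a triangle whose only link to the rest of $G$ is a single edge at $a$, which is a bridge unless $G$ is the triangle itself, contradicting Lemma~\ref{lem:nobridge} (or giving a small graph handled by the base cases). So from now on $a\neq b$; moreover, since $G$ is bridgeless, $a$ and $b$ lie in the same component of $G-\{u,v\}$ (otherwise $uv$ would be a bridge), so the reduced graphs below stay connected.

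The workhorse is the reduction $G':=G-\{u,v\}+ab$, available whenever $ab\notin E(G)$. Here $\delta_n=2$ and $\delta_m=3-1=2$, the degrees of $a$ and $b$ are preserved so no degree-$1$ vertex is created, and, provided $G'$ has no cubic component, one gets $\delta_{\lb}=\frac{8-2}{6}=1$. To show $\delta_{\gamma}\le 1$ I would start from a minimum maximal matching $M'$ of $G'$ and extend it to $G$ by a short case analysis on how $M'$ meets $ab$: if $ab\in M'$, replace it by $\{au,vb\}$; if $ab\notin M'$, then at least one of $a,b$ is saturated by maximality, and adding $uv$ (when both are saturated) or $vb$ (when only $a$ is) yields a maximal matching of $G$ of size $|M'|+1$. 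Hence $\delta_{\gamma}\le 1=\delta_{\lb}$, a contradiction. Since the degree of every vertex other than $u,v$ is unchanged, $G'$ is non-cubic automatically unless both $a$ and $b$ have degree $3$ and every other vertex already has degree $3$; so this reduction settles every case except the four-cycle case $ab\in E(G)$ and the cubic case.

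Two obstructions therefore remain. The first is the four-cycle case $ab\in E(G)$, where the edge cannot be added; I would treat this with tailored deletions (of $\{u,v\}$, or of the whole four-cycle $\{u,v,a,b\}$ while reaching into the third neighbours of $a$ and $b$), splitting into subcases according to those degrees and adjacencies, with the small configurations absorbed by the base cases. The genuinely hard case, and the reason Lemma~\ref{lem:technicallemma} was proved, is the cubic one: $G$ is $3$-regular except for the single adjacent pair $u,v$, so $G-\{u,v\}+ab$ is cubic and the naive merge only yields $\delta_{\lb}=\frac{2}{3}<1$. Here the plan is to delete the larger set $V_0=\{u,v,a\}$, so that the three vertices $N=\{a_1,a_2,b\}$ (the neighbours of $a$ other than $u$, together with $b$) each drop to degree $2$ and form a single component of size three. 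Taking $E^*$ to be the three pairs on $N$ and invoking Lemma~\ref{lem:technicallemma} (legitimate since $G$ is bridgeless) produces a non-edge $e$ with $G-\{u,v,a\}+e$ having no cubic component; the only way the lemma can fail to apply is that $\{a_1,a_2,b\}$ is already a triangle, which pins $G$ down to an explicit six-vertex graph covered by the base cases. With this reduction $\delta_n=3$ and $\delta_m=5-1=4$, so $\delta_{\lb}=\frac{4}{3}$, and it remains to extend $M'$ at a cost of at most one edge, covering $u,v$ by $uv$ and leaving $a$ dominated through its saturated neighbours.

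The step I expect to be the main obstacle is exactly this cubic case. Beyond having to verify the hypotheses of Lemma~\ref{lem:technicallemma}, the matching-extension bookkeeping is delicate: the added non-edge $e$ may be used by $M'$ and must then be rerouted through $a_1,a_2,b$ without destroying maximality, all while keeping $\delta_{\gamma}\le 1$. By comparison the four-cycle case is routine but tedious, owing to its several sub-configurations.
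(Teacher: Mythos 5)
Your degenerate case, your main contraction $G'=G-\{u,v\}+ab$ with the $ab\in M'$/$ab\notin M'$ case split, and your observation that this settles everything unless $ab\in E(G)$ or the contracted graph is cubic all match the paper's proof exactly (the paper even uses the simpler uniform extension $M'+uv$ when $ab\notin M'$, which is maximal since every edge of $G-\{u,v\}$ is an edge of $G'$). The four-cycle case is only sketched, but your sketch is the paper's argument: one cannot afford to delete just $\{u,v,a,b\}$ (that gives $\delta_{\lb}\leq\frac{16-6}{6}<2$ while $\delta_\gamma\leq 2$), so one must reach into a third neighbour $w_1$ of $a$, delete $\{u,v,a,b,w_1\}$ and extend by two edges, getting $\delta_{\lb}\geq\frac{20-8}{6}=2\geq\delta_\gamma$.

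The cubic case, however, contains a genuine gap, and it is exactly where you anticipated trouble. With $V_0=\{u,v,a\}$ and one added edge $e$ among $\{a_1,a_2,b\}$ you have $\delta_n=3$, $\delta_m=4$, hence $\delta_{\lb}=\frac{4}{3}$, so you must achieve $\delta_\gamma\leq 1$ — and this the proposed extension cannot guarantee. Your plan of "covering $u,v$ by $uv$ and leaving $a$ dominated through its saturated neighbours" presumes $a_1$ and $a_2$ are both saturated by $M'$, but maximality of $M'$ only forces saturation of \emph{one endpoint of the single added edge} $e$. Concretely, if $e=a_1b\notin M'$, then $M'$ can leave both $a_2$ and $b$ unsaturated (nothing forces $a_2b\in E(G')$), and then no single edge of $G$ simultaneously dominates $uv$, $vb$ and $aa_2$: adding $uv$ leaves $aa_2$ undominated, adding $aa_2$ leaves $uv$ undominated, adding $ua$ leaves $vb$ undominated. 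Likewise, if $e\in M'$ (say $e=a_1a_2$, a non-edge of $G$ that must be removed), rerouting as $M'-a_1a_2+aa_1+uv$ orphans the edges at $a_2$, whose other neighbours need not be saturated; every repair costs a net $+2$. There is also a secondary unaddressed degeneracy: if $b\in\{a_1,a_2\}$ then $N$ has only two elements and the component hypothesis of Lemma~\ref{lem:technicallemma} fails. This budget problem is structural, not cosmetic, and it is why the paper does something much heavier after the contraction forces all vertices other than $u,v$ to have degree three: it first rules out common neighbours of $v_1,v_2$ and edges among $w_{11},w_{12},w_{21},w_{22}$ by direct five-vertex deletions, and only then invokes Lemma~\ref{lem:technicallemma} with the \emph{five}-vertex set $V_0=\{u_1,u_2,v_1,v_2,w_{11}\}$ and $E^*$ joining the third neighbours of $w_{11}$ to $\{w_{21},w_{22}\}$ (non-edge guaranteed because otherwise $v_1w_{12}$ is a bridge). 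That yields $\delta_n=5$, $\delta_m\geq 8$, hence $\delta_{\lb}\geq 2$, which affords $\delta_\gamma\leq 2$ and leaves exactly the slack needed for the rerouting when the added edge lies in $M'$. To repair your proof you would need to either prove $\delta_\gamma\leq 1$ for your small reduction (which the above configurations make doubtful) or enlarge the deleted set as the paper does.
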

\begin{proof}
See Figure~\ref{fig:Lemma11} for a depicttion of the case analysis for this lemma. Suppose for a contradiction that there are two adjacent degree-$2$ vertices $u_1$ and $u_2$. Let $v_1$ be the other neighbour of $u_1$ and let $v_2$ be the other neighbour of $u_2$. First, if $v_1=v_2$ then consider $G'=G-\left\{u_1,u_2, v_1\right\}$. Then a maximal matching $M'$ of $G'$ can be extended to a maximal matching $M' + u_1v_1$ of $G$, so $\delta_{\gamma}\leq 1$. Therefore
$\delta_{\lb} \geq \frac{4 \delta_n - \delta_m}{6} \geq \frac{4 \cdot 3 - 4}{6} > 1 \geq \delta_{\gamma}.$

So we may assume that $v_1\neq v_2$. Next, suppose that $v_1v_2 \in E(G)$. Since the graph induced by $\left\{u_1,u_2,v_1,v_2 \right\}$ is not a counterexample, without loss of generality $v_1$ has a neighbour $w_1$ distinct from $u_1,v_2$. Define $G'=G-\left\{u_1,u_2, v_1, v_2, w_1\right\}$. Then a maximal matching $M'$ of $G'$ can be extended to a maximal matching $M'+u_2v_2+v_1w_1$ of $G$, so $\delta_{\gamma} \leq 2$. So

$$\delta_{\lb} \geq \frac{4 \delta_n - \delta_m}{6} \geq \frac{4 \cdot 5 - 8}{6} =2 \geq \delta_{\gamma}.$$

Therefore $v_1v_2 \notin E(G)$. Next, consider the graph $G'= G- \left\{  u_1,u_2 \right\} + v_1v_2$. (Equivalently: $G'$ is obtained from $G$ by contracting the edges $u_1v_1$ and $u_2v_2$.) 
Let $M'$ be a maximal matching of $G'$. If $v_1v_2 \in M'$, then $M'-v_1v_2+u_1v_1+u_2v_2$ is a maximal matching of $G$. On the other hand, if $v_1v_2 \notin M'$, then $M'+u_1u_2$ is a maximal matching of $G$. In both cases the new matching of $G$ has size $|M'|+1$, so we have $\delta_{\gamma} \leq 1$. Furthermore, $\delta_n=2$ and $\delta_m=2$. 
If $G'$ is not cubic, then $\delta_I = 0$ so it follows that 
$$\delta_{\lb} \geq \frac{4 \cdot \delta_n - \delta_m}{6} = \frac{4 \cdot 2 - 2}{6} = 1 \geq \delta_{\gamma},$$
a contradiction. Thus, $G'$ must be cubic. Therefore from now on we may assume that all vertices of $G$ other than $u$ and $v$ have degree three. In particular, $v_1$ has neighbours $w_{11}, w_{12}$ and $v_2$ has neighbours $w_{21}, w_{22}$ such that $\left\{w_{11},w_{12},w_{21}, w_{22}\right\}$ is disjoint from $\left\{u_1,u_2,v_1,v_2  \right\}$.

Next, suppose that $v_1$ and $v_2$ have a common neighbour, without loss of generality it is $w_{12}$. Then a maximal matching $M'$ of $G':=G-\left\{u_1,u_2,v_1,v_2,w_{12} \right\}$ can be extended to the maximal matching $M'+u_1v_1 + w_{12}v_2$ of $G$, which implies that $\delta_{\gamma} \leq 2$. Thus again $\delta_{\lb} \geq \frac{4 \cdot \delta_n - \delta_m}{6} \geq \frac{4 \cdot 5 - 8}{6} =2 \geq \delta_{\gamma}.$ We conclude that $w_{11},w_{12},w_{21}, w_{22}$ must be pairwise distinct. Our next goal is to show that these four vertices form an independent set.

First, assume that $w_{11}w_{12}$ or $w_{21}w_{22}$ is an edge, without loss of generality the former. Then a maximal matching $M'$ of $G'=G-\left\{u_1,u_2,v_1,w_{11},w_{12}\right\}$ yields a maximal matching $M'+u_1u_2+w_{11}w_{12}$ of $G$, so that $\delta_{\gamma}\leq 2$. Thus $\delta_{\lb} \geq \frac{4\delta_n-\delta_m}{6}\geq \frac{4\cdot 5-8}{6}=2 \geq \delta_{\gamma}$; contradiction.

Second, assume that there is an edge joining $\left\{w_{11},w_{12}\right\}$ and $\left\{w_{21},w_{22}\right\}$; without loss of generality $w_{12}w_{21}$ is such an edge.
Observe that then at least one neighbour $x$ of $w_{11}$ is not equal to $v_1$ or $w_{21}$, nor adjacent to $w_{21}$ (because otherwise $w_{21}$ would have more than three distinct neighbours). We define the reduced graph $G'=G-\left\{u_1,u_2,v_1,v_2,w_{11}\right\} + xw_{21}$. Let $M'$ be a maximal matching of $G'$. If $xw_{21} \in M'$, then $M'-x w_{21} + xw_{11} + w_{21}v_2 + u_1v_1$ is a maximal matching of $G$, and otherwise $M'+v_1w_{11}+u_2v_2$ is a maximal matching of $G$. In both cases the new matching contains two more edges than $M'$, so $\delta_{\gamma}\leq 2$. Furthermore, $\delta_{n}=5$ and $\delta_{m}\geq 9-1=8$ (here we use that $xw_{21}$ is an edge in $G'$ but not in $G$). Crucially, $G'$ does not contain any cubic component, so that $\delta_{I}\geq 0$. Indeed, the only component of $G'$ that could a priori be cubic is the component $C$ containing the added edge $xw_{21}$. But due to the edge $w_{12}w_{21}$ (which we assumed to exist), we know that $C$ also contains the vertex $w_{12}$, which has degree less than $3$ in $G'$.
We obtain $\delta_{\mu} \geq \frac{4 \delta_n-\delta_m + 2\delta_{I}}{6} \geq 2 \geq \delta_{\gamma}$; contradiction. This concludes the proof that $\left\{w_{11},w_{12},w_{21},w_{22}\right\}$ is independent.

In summary, we have deduced so far that $\left\{u_1,u_2,v_1,v_2,w_{11},w_{12},w_{21},w_{22}  \right\}$ induces a tree in $G$, and all vertices of $G$ other than $u_1$ and $u_2$ have degree three.
Let $x_{111}$ and $x_{112}$ denote the two neighbours of $w_{11}$ that are distinct from $v_1$.

Now let us define the vertex set $V_0:=\left\{u_1,u_2,v_1,v_2,w_{11}\right\}$ and the set $E^*:=\left\{x_{111}w_{21},x_{112}w_{21},x_{111}w_{22},x_{112}w_{22}\right\}$ which consists of pairs of neighbours of $V_0$. If each pair in $E^*$ is present as an edge in $G$, then it follows that $v_1w_{12}$ is a bridge, contradicting Lemma~\ref{lem:nobridge}. Thus we can apply Lemma~\ref{lem:technicallemma} to conclude that there exists $e\in E^*$ such that $e\notin E(G)$ and the reduced graph $G'=G-V_0+e$ has no cubic component. This means that $\delta_I=0$ with respect to $G'$. Furthermore, by exactly the same analysis as before (by symmetry of $G$, we may assume that $e=xw_{21}$ for some neighbour $x$ of $w_{11}$), we again obtain $\delta_{\gamma}\geq 2, \delta_n=5$ and $\delta_m\geq 9-1=8$. Therefore $\delta_{\mu} \geq \frac{4 \delta_n-\delta_m}{6} \geq 2 \geq \delta_{\gamma}$; contradiction.

\end{proof}

\begin{lemma}\label{lem:nocubicneighbours}
Every degree-$2$ vertex of $G$ has at most one degree-$3$ neighbour.
\end{lemma}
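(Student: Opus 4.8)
The plan is to suppose, for a contradiction, that some degree-$2$ vertex $u$ has two degree-$3$ neighbours $v_1$ and $v_2$. By Lemmas~\ref{lem:nodegreeone} and~\ref{lem:noadjacentdegreetwos} every neighbour of a degree-$2$ vertex already has degree $3$, so proving this lemma will in fact show that $G$ has no degree-$2$ vertex whatsoever, which is the outcome we are really after. I would write $a_1,a_2$ for the two neighbours of $v_1$ other than $u$ and $b_1,b_2$ for those of $v_2$ other than $u$, and then proceed exactly as in Lemma~\ref{lem:noadjacentdegreetwos}: choose a small set $V_0\supseteq\{u,v_1,v_2\}$, possibly insert one edge among the neighbours of $V_0$, form the reduced graph $G'$, and show that a minimum maximal matching $M'$ of $G'$ lifts to a maximal matching of $G$ with $\delta_\gamma$ small enough that (\ref{ineq:goal}) holds. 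As all reductions will keep $G'$ free of cubic components, it suffices throughout to check $\delta_{\lb}\geq\frac{4\delta_n-\delta_m}{6}\geq\delta_\gamma$.

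First I would dispose of the degenerate local configurations, each of which produces extra slack. If $v_1v_2\in E(G)$ then $u,v_1,v_2$ span a triangle, so that $v_1$ and $v_2$ have a single outside neighbour each, say $a$ and $b$; deleting $\{u,v_1,v_2\}$ and adding the edge $ab$ (which gives $\delta_n=3$, $\delta_m=4$) lets one lift $M'$ at a cost of one edge, matching $u$ to whichever of $v_1,v_2$ is opposite a covered outside neighbour, or rerouting through $v_1a$ and $v_2b$ if $ab\in M'$. The remaining degenerate cases, namely that $v_1$ and $v_2$ share a second neighbour, or that there is an edge inside $\{a_1,a_2,b_1,b_2\}$ or between the pairs $\{a_1,a_2\}$ and $\{b_1,b_2\}$, are closed by analogous deletions of three to five vertices and one- or two-edge extensions, several of them terminating in graphs on at most nine vertices that are checked directly. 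After this we may assume that $\{u,v_1,v_2,a_1,a_2,b_1,b_2\}$ induces a tree, with $a_1,a_2,b_1,b_2$ distinct and pairwise non-adjacent.

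In this main case I would take $V_0=\{u,v_1,v_2\}$, so that the outside neighbourhood is $N=\{a_1,a_2,b_1,b_2\}$, and apply Lemma~\ref{lem:technicallemma} with $E^*$ equal to the four pairs between $\{a_1,a_2\}$ and $\{b_1,b_2\}$. Since $G$ is bridgeless by Lemma~\ref{lem:nobridge} and the grandchildren are independent, every pair in $E^*$ is a non-edge, so the lemma yields an edge $e\in E^*$ with $e\notin E(G)$ for which $G'=G-V_0+e$ has no cubic component; hence $\delta_I=0$, $\delta_n=3$, $\delta_m=5$ and $\delta_{\lb}\geq\frac{7}{6}$. The aim is then to produce from $M'$ a maximal matching of $G$ of size at most $|M'|+1$. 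If the inserted edge $e$ lies in $M'$, I would delete it and match each of its endpoints to the adjacent $v_i$, which covers both $v_1$ and $v_2$, dominates the whole gadget, and costs one edge net. If $e\notin M'$, I would instead match $u$ to whichever of $v_1,v_2$ has both of its grandchildren dominated by $M'$.

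The step I expect to be the main obstacle is precisely this last lift in the tree case. The structural difficulty is that, unlike in the triangle case, each of $v_1,v_2$ now has \emph{two} outside neighbours, so matching $u$ to $v_1$ leaves $v_2$ maximal only if \emph{both} $b_1$ and $b_2$ are covered by $M'$ (and symmetrically with the roles of $v_1,v_2$ reversed). A single inserted edge does not obviously guarantee such a pair of covered grandchildren, and simply matching $v_2$ to a free grandchild would cost a second edge, overspending the available slack $\delta_{\lb}\geq\frac{7}{6}$. Resolving this is what forces the careful choice of $E^*$ and, in the stubborn branches, the enlargement of $V_0$ by one or two further grandchildren to buy the extra room ($\delta_n=4$ or $5$) needed for a two-edge extension, all while using Lemma~\ref{lem:technicallemma} to certify both that the inserted edge is genuinely absent from $G$ and that $G'$ acquires no cubic component. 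Once every branch is checked, $\delta_{\lb}\geq\delta_\gamma$ yields the contradiction and completes the proof.
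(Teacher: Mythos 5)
Your reduction in the main (tree) case has a genuine gap, and it is precisely the one you flag yourself without resolving. With $V_0=\{u,v_1,v_2\}$ and a single edge $e$ supplied by Lemma~\ref{lem:technicallemma}, you have $\delta_n=3$, $\delta_m=6-1=5$, hence $\delta_{\lb}\geq \frac{7}{6}$, so you must achieve $\delta_{\gamma}\leq 1$. Your swap when $e\in M'$ is fine, but when $e\notin M'$ the maximality of $M'$ only guarantees that \emph{one} endpoint of $e$ is covered; the two grandchildren not on $e$, and even the other endpoint of $e$, may all be uncovered by $M'$. In that situation no single-edge extension exists: adding $uv_1$ leaves $v_2w_{22}$ (in your notation $v_2b_2$) undominated, adding $uv_2$ leaves $v_1w_{12}$ undominated, and no swap is available since $e\notin M'$. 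You cannot escape by choosing $M'$ conveniently, because the argument must work for an arbitrary minimum maximal matching of $G'$. So the configuration you defer to ``stubborn branches'' is not a fringe case but the generic one, and the lemma is not proved.

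The paper's resolution is also structurally different from the patch you gesture at (enlarging $V_0$ by grandchildren and reapplying Lemma~\ref{lem:technicallemma}). After first eliminating the subcase where $w_{11},w_{12}$ both have degree two and the subcase where $w_{i1},w_{i2}$ have three common neighbours, it deletes $V_0=\{u,v_1,v_2,w_{12},w_{21}\}$ and inserts \emph{two} edges $w_{11}q_1$ and $w_{22}q_2$, one per side, so that each of $v_1,v_2$ is handled independently at a cost of one edge each, giving $\delta_{\gamma}\leq 2$ against $\delta_{\lb}\geq \frac{4\cdot 5-8}{6}=2$. Crucially, Lemma~\ref{lem:technicallemma} certifies only one inserted edge, so the paper needs a bespoke argument (the auxiliary graphs $H$ and $H^{+}$, fed by two consequences of bridgelessness and by the no-three-common-neighbours assumption, which is what guarantees the non-edges $w_{11}q_1,w_{22}q_2$ exist at all) to choose $q_1,q_2$ so that $G'$ has no cubic component. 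None of this machinery appears in your sketch. A smaller remark: in your triangle case you insert the edge $ab$ without ruling out $a=b$ or $ab\in E(G)$; the paper avoids the issue by inserting nothing there, since $\delta_n=3$, $\delta_m\leq 5$ already gives $\delta_{\lb}>1\geq\delta_{\gamma}$.
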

\begin{proof}
We suppose for a contradiction that a degree-$2$ vertex $u$ of $G$ has two degree-$3$ neighbours $v_1$ and $v_2$. 

If $v_1v_2\in E(G)$, then every maximal matching $M'$ of $G'=G-\left\{u,v_1,v_2\right\}$ can be extended to a maximal matching $M'+v_1v_2$ of $G$, so $\delta_{\gamma}\leq 1$. So $\delta_{\lb} \geq \frac{4 \delta_n-\delta_m}{6} \geq \frac{4 \cdot 3-5}{6}> 1 \geq \delta_{\gamma}$; contradiction.
Therefore $v_1v_2\notin E(G)$. So $v_1$ has neighbours $w_{11}, w_{12}$ and $v_2$ has neighbours $w_{21}, w_{22}$ such that $\left\{w_{11},w_{12},w_{21}, w_{22}\right\}$ is disjoint from $\left\{u,v_1,v_2 \right\}$.

\textbf{Case $1$: $v_1$ and $v_2$ have a common neighbour that is distinct from $u$.}\\
 Without loss of generality this common neighbour is $w_{12}=w_{21}$. Consider the graph $G'=G-\left\{u,v_1,v_2,w_{11},w_{12}\right\}$ with a maximal matching $M'$. Then $M'+v_1w_{11}+v_2w_{12}$ is a maximal matching of $G$, so $\delta_{\gamma}\geq 2$. Moreover, $\delta_n=5$ and $\delta_{m}\leq 9$. If $\delta_m\leq 8$, then it follows that $\delta_{\lb}\geq 2 \geq \delta_{\gamma}$. Thus $\delta_m$ must equal $9$, which implies that $w_{11},w_{12},w_{22}$ are distinct vertices, that $w_{11}$ and $w_{12}$ have degree three and that $w_{11}w_{12} \notin E(G)$. By symmetry, $w_{22}$ must have degree three as well, and $w_{22}w_{12}\notin E(G)$. Thus, $w_{11},w_{12},w_{22}$ are distinct degree-$3$ vertices that either form an independent set or induce the graph with the edge $w_{11}w_{22}$.
 
 In particular, $w_{12}$ has a neighbour $x_{12}$ which is distinct from $u,v_1,v_2,w_{11},w_{22}$.
For $i \in \left\{1,2\right\}$, let $x_{ii1}$ and $x_{ii2}$ be the two neighbours of $w_{ii}$ that are distinct from $v_i$. 

First assume that $x_{12}$ is a common neighbour of $w_{11}$ and $w_{22}$. If additionally $w_{11}w_{22} \in E(G)$, then the structure of $G$ is fully determined; it is the graph induced by the seven vertices $u,v_1,v_2,w_{11},w_{12},w_{22},x_{12}$, which clearly is not a counterexample. Thus $w_{11}w_{22}\notin E(G)$ and so $w_{11}$ has a neighbour $x_{111}$ that is distinct from $v_1, x_{12}$ and $w_{22}$. In that case, a maximal matching $M'$ of $G'=G-\left\{u,v_1,v_2,w_{11},w_{12},w_{22},x_{12},x_{111}\right\}$ can be extended to the maximal matching $M'+w_{11}x_{111}+v_1w_{12}+v_2w_{22}$ of $G$, so that $\delta_{\gamma} \leq 3$, $\delta_n=8$ and $\delta_m\leq 13$. Hence $\delta_{\lb} \geq \frac{4 \cdot 8 - 13}{6} > 3 \geq \delta_{\gamma}$; contradiction. We have thus derived that $x_{12}$ is not a neighbour of both $w_{11}$ and $w_{22}$.

By symmetry, we may assume that $x_{12}$ is not a neighbour of $w_{22}$. We then consider the graph $G'=G-\left\{ u,v_1,v_2,w_{11},w_{12}\right\} + x_{12}w_{22}$.
 Then $\delta_n=5$. Note that to obtain $G'$ from $G$, we not only delete vertices, but we also add an edge, so $\delta_m \leq 9-1=8$. Consider a maximal matching $M'$ of $G'$. If $x_{12}w_{22} \in M'$ then  $M'-x_{12}w_{22}+x_{12}w_{12}+v_{2}w_{22} + v_1w_{11}$ is a maximal matching of $G$, and otherwise $M'+v_2w_{12}+v_1w_{11}$ is a maximal matching of $G$. In both cases $\delta_{\gamma}\leq 2$. To arrive at a contradiction, we still need to check that $G'$ has no cubic component (so that $\delta_I=0$). For this, it suffices to demonstrate that the component $C$ of $G'$ containing the added edge $x_{12}w_{22}$ is not cubic. 
 If $w_{11}w_{22}\in E(G)$, then it is immediate that $w_{22}$ is in $V(C)$ and has degree less than three in $G'$. Thus we may assume that $w_{11}w_{22}\notin E(G)$.
Because $G$ has no bridge (in particular $w_{11}v_1$ is not a bridge), there exists a path $P$ in $G$ that joins $\left\{x_{111},x_{112}\right\}$ with $\left\{x_{12},w_{22}\right\}$ while avoiding $\left\{w_{11},u,v_1,v_2,w_{12}\right\}$. (Here we allow $P$ to be a single vertex.) Note that $P$ is also a path in $G'$ and hence $x_{111} \in V(C)$ or $x_{112} \in V(C)$. Since $x_{111}$ and $x_{112}$ have degree less than three in $G'$, it follows that $C$ is not cubic.

\textbf{Case $2$: $u$ is the only common neighbour of $v_1$ and $v_2$.}

From now on, we know that $w_{11},w_{12},w_{21},w_{22}$ are pairwise distinct. Our next task is to show that they form an independent set. 

\textbf{Case $2.1$: $w_{11}w_{12} \in E(G)$ or $w_{21}w_{22} \in E(G)$.}

By symmetry, it suffices to consider the case that $w_{11}w_{12} \in E(G)$. Suppose for a contradiction that $w_{11}w_{12} \in E(G)$. 

If additionally there is an edge between $\left\{w_{11},w_{12}\right\}$ and $\left\{w_{21},w_{22}\right\}$  (say $w_{12}w_{21}$ is an edge), then a maximal matching $M'$ of $G'=G-\left\{u,v_1,v_2,w_{11},w_{12},w_{21}  \right\}$ can be extended to the maximal matching $M'+v_1w_{11}+v_2w_{21}$ of $G$. Then $\delta_{\lb}\geq \frac{4 \delta_n-\delta_m}{6} \geq \frac{4\cdot 6-9}{6} > 2 \geq \delta_{\gamma}$; contradiction. Thus there is no edge between $\left\{w_{11},w_{12}\right\}$ and $\left\{w_{21},w_{22}\right\}$.

By considering the graph $G'=G-\left\{u,v_1,v_2,w_{11},w_{12}  \right\}$, it is easily seen that each of $w_{11}, w_{12}$ needs to have degree three; otherwise $\delta_m\leq 8$, leading to the contradiction $\delta_{\lb} \geq \frac{4 \delta_n-\delta_m}{6} \geq \frac{4\cdot 5-8}{6}= 2 \geq \delta_{\gamma}$, where the last inequality holds because any maximal matching $M'$ of $G'$ can be extended to the maximal matching $M'+w_{11}w_{12}+uv_2$ of $G$. 

For $i\in \left\{1,2 \right\}$, let $x_{1i}$ denote the neighbour of $w_{1i}$ that is distinct from $v_1,w_{11},w_{12}$.

Let $V_0:=\left\{u,v_1,v_2,w_{11},w_{12}\right\}$ and note that $N:= \left\{ x_{11},x_{12},w_{21},w_{22}\right\}$ is the set of neighbours of $V_0$. (Possibly $N$ has less than four distinct elements since it could be that $x_{11}=x_{12}$.) 
If $\left\{x_{11},x_{12}\right\}$ is completely connected to $\left\{w_{21},w_{22}\right\}$, then $G$ must be the graph induced by $u,v_1,v_2,w_{11},w_{12},w_{21},w_{22},x_{11},x_{12}$, which is a graph on nine vertices; contradiction. Thus there must exist $e\in E^*:=\left\{x_{11}w_{21},x_{11}w_{22},x_{12}w_{21},x_{12}w_{22} \right\}$ such that $e \notin E(G)$. 
Then by Lemma~\ref{lem:technicallemma} applied to $V_0$ and $E^*$, there exists an edge $e\in E^*$ such that the graph $G'=G-V_0+e$ has no cubic component. By symmetry, we may assume that $e=x_{11}w_{22}$. 
By the choice of $e$ we have $\delta_I=0$. Moreover $\delta_n=5$ and $\delta_m\leq 8$. Let $M'$ be a maximal matching of $G'$. If $x_{11}w_{22}\in M'$, then $M'-x_{11}w_{22}+x_{11}w_{11}+v_2w_{22}+v_1w_{12}$ is a maximal matching of $G$, and otherwise $M'+w_{11}w_{12}+uv_2$ is a maximal matching of $G$. In both cases, $\delta_{\gamma}\leq 2$. Thus we arrrive at the contradiction $\delta_{\lb}\geq \frac{4\delta_n-\delta_m+2\delta_I}{6}=\frac{4\cdot 5-8}{6}=2 \geq \delta_{\gamma}$.

\textbf{Case $2.2$: $w_{11}w_{12}, w_{21}w_{22}\notin E(G)$, but $w_{11}, w_{12}, w_{21}, w_{22}$ do not form an independent set.}

Without loss of generality, we may assume that $w_{12}w_{21}\in E(G)$. By considering the graph $G'=G-\left\{u,v_1,v_2,w_{12},w_{21}\right\}$, it quickly follows that $w_{12}$ and $w_{21}$ must have degree three.
So let $x_{12}$ (respectively $x_{21}$) denote the neighbour of $w_{12}$ (respectively $w_{21}$) that is distinct from $v_1,v_2,w_{12},w_{21}$.

First suppose that additionally $w_{11}w_{22}, w_{11}x_{12}, w_{22}x_{21} \in E(G)$. If $x_{12}$ and $x_{21}$ are adjacent or both have degree two, then $G$ is a graph on nine vertices and it is easy to see that it is not a counterexample. So we may assume that $x_{21}$ has a third neighbour $y$ distinct from $w_{21}$ and $w_{22}$. In that case a maximal matching $M'$ of $G-\left\{u,v_1,v_2,w_{11},w_{12},w_{21},w_{22},x_{12},x_{21},y \right\}$ can be extended to the maximal matching $M'+w_{11}w_{22}+uv_2+w_{12}x_{12}+x_{21}y$ of $G$. Since $x_{12} \neq x_{21}$ (otherwise this vertex would have degree four), it follows that $\delta_n=10$. Furthermore $\delta_m \leq 16$, so $\delta_\mu \geq \frac{4 \cdot 10 -16}{6}=4 \geq \delta_{\gamma}$; contradiction. We conclude that at least one of $w_{11}w_{22},w_{11}x_{12},w_{22}x_{21}$ is not an edge of $G$. 

From Lemma~\ref{lem:technicallemma} applied with $V_0=\left\{u,v_1,v_2,w_{12},w_{21}\right\}$ and $E^*=\left\{ w_{11}w_{22},w_{11}x_{12},w_{22}x_{21}\right\}$, we obtain an $e\in E^*$ such that $G'=G-V_0+e$ has no cubic component.
Let $M'$ be a maximal matching of $G'$.
First, suppose that $e=w_{11}w_{22}$. If $w_{11}w_{22} \in M'$, then $M'-w_{11}w_{22}+v_1w_{11}+v_{2}w_{22}+w_{12}w_{21}$ is a maximal matching of $G$, and otherwise $M'+v_1w_{12}+v_2w_{21}$ is a maximal matching of $G'$. So in both cases $\delta_{\gamma}\leq 2$.
Second, suppose that that $e=w_{11}x_{12}$ (the case that $e=w_{22}x_{21}$ is symmetric). If $w_{11}x_{12} \in M'$, then $M'-w_{11}x_{12}+v_1w_{11}+x_{12}w_{12}+v_{2}w_{21}$ is a maximal matching of $G$, and otherwise $M'+v_1w_{12}+v_2w_{21}$ is a maximal matching of $G$. So in all cases we have $\delta_{\gamma}\leq 2$.
For all possible choices of $e$, we have $\delta_n=5$ and $\delta_m\leq 9-1=8$. Since no component of $G'$ is cubic, we conclude that $\delta_{\lb}\geq \frac{4\delta_n-\delta_m+2\delta_I}{6}\geq \frac{4\cdot 5-8+0}{6}=2 \geq \delta_{\gamma}$; contradiction.

\begin{figure}[h]
\centering
\includegraphics[width=0.66\textwidth]{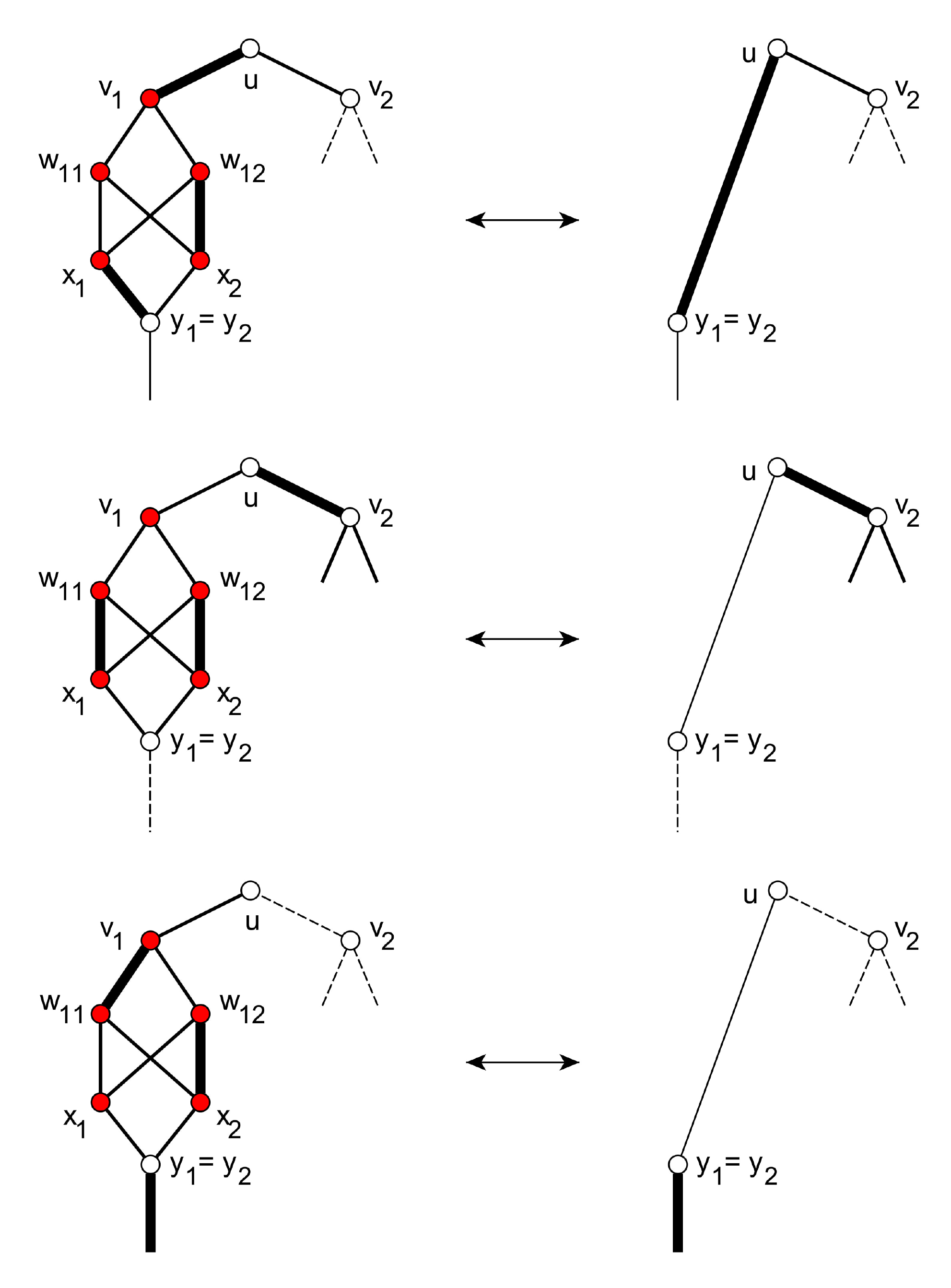}
\caption{The end of Case $2.3.1$. The figures on the left represent $G$, the pictures on the right $G'$. From top to bottom, these are three possible scenarios in which a maximal matching $M'$ of $G'$ can be extended to a maximal matching $M$ of $G$ with $|M|=|M'|+2$. The fat lines represent edges that are in the maximal matching, while the dashed lines are edges that may or may not be in the maximal matching.}
\label{fig:case2_3_1}
\end{figure}

\textbf{Case 2.3: $w_{11},w_{12}, w_{21},w_{22}$ form an independent set.}

So far, we have derived that $u,v_1,v_2,w_{11},w_{12},w_{21},w_{22}$ are distinct vertices and induce a tree.

\textbf{Case 2.3.1: For some $i \in \left\{ 1,2\right\}$,  $w_{i1}$ and $w_{i2}$ have three common neighbours.}

By symmetry, it suffices to consider the case $i=1$. We already know that $v_1$ is a common neighbour of $w_{11}$ and $w_{12}$. Suppose they have two other common neighbours, say $x_{1}$ and $x_{2}$. By the structure derived so far, $x_1$ and $x_2$ are distinct from $u,v_1,v_2,w_{11},w_{12},w_{21},w_{22}$.

If $x_1$ and $x_2$ are either adjacent or both have degree two, then $uv_1$ is a bridge, contradicting Lemma~\ref{lem:nobridge}. 
If one of $x_1,x_2$ (say $x_2$) has degree two then a   maximal matching $M'$ of $G'=G-\left\{ u,v_1,v_2,w_{11},w_{12},w_{22},x_1,x_2 \right\}$ yields the maximal matching $M'+w_{11}x_{1}+v_1w_{12}+v_2w_{22}$ of $G$, so $\delta_{\lb} \geq \frac{4 \delta_n-\delta_m}{6}\geq \frac{4 \cdot 8 -13}{6} > 3 \geq \delta_{\gamma}$, contradiction. Thus both $x_1$ and $x_2$ have degree three and are nonadjacent. For $i\in \left\{1,2\right\}$, let $y_i$ denote the neighbour of $x_i$ that is distinct from $w_{11}$ and $w_{12}$. 

If $y_1 \neq y_2$, then a  maximal matching $M'$ of $G'= G- \left\{ u,v_1,w_{11},w_{12},x_1,x_2,y_1,y_2\right\}$ yields the maximal matching $M'+uv_1+x_1y_1+x_2y_2$ of $G$, so $\delta_{\lb} \geq \frac{4 \delta_n-\delta_m}{6}\geq \frac{4 \cdot 8 -14}{6} \geq 3 \geq \delta_{\gamma}$, contradiction. 

Thus $y_1=y_2$. See Figure~\ref{fig:case2_3_1}. Consider the graph $G'=G-\left\{ v_1,w_{11},w_{12},x_1,x_2 \right\} + uy_1$. Note that $G'$ has no cubic component (indeed, the vertex $u$ has degree two in $G'$), so $\delta_I=0$. Furthermore, $\delta_n=5$ and $\delta_m \leq 9-1=8$.  Let $M'$ be a maximal matching of $G'$. If $uy_1\in M'$ then $M'-uy_1+uv_1+x_1y_1+w_{12}x_2$ is a maximal matching of $G$. Otherwise, if $uy_1 \notin M'$, we need to do something slightly more involved than before: since $M'$ is maximal and $uy_1 \notin M'$, we must either have $uv_2 \in M'$ (in which case $M'+w_{11}x_1+w_{12}x_2$ is a maximal matching of $G$) or an edge incident to $y_1$ is in $M'$ (in which case $M'+v_1w_{11}+w_{12}x_2$ is a maximal matching of $G$). Thus in all cases $\delta_{\gamma} \leq 2$, so $\delta_{\lb} \geq \frac{4\cdot 5-8}{6} \geq 2 \geq \delta_{\gamma}$; contradiction.

\textbf{Case 2.3.2 For $i \in \left\{1,2\right\}$, $w_{i1}$ and $w_{i2}$ do not have three common neighbours.}


First, suppose that at least one of $\left\{w_{11},w_{12}\right\}$ and $\left\{w_{21},w_{22}\right\}$ contains no degree-$3$ vertex; say $w_{11}$ and $w_{12}$ both have degree two. Then for $i\in \left\{1,2\right\}$, let $x_{1i}$ be the unique neighbour of $w_{1i}$ that is distinct from $v_1$.
 If $x_{11}=x_{12}$, then for $G'= G-\left\{u,v_1,w_{11},w_{12},x_{11} \right\}$ we have that $M'+uv_1+w_{12}x_{12}$ is a maximal matching of $G$ for any maximal matching $M'$ of $G'$, so $\delta_{\lb} \geq \frac{4\delta_n-\delta_m}{6} \geq \frac{4\cdot 5-7}{6}>2 \geq \delta_{\gamma}$; contradiction.
  On the other hand, if $x_{11}\neq x_{12}$, then $w_{11}$ and $x_{12}$ are not adjacent in $G$, so we can consider $G'=G-\left\{u,v_1,v_2,w_{12},w_{21}\right\}+w_{11}x_{12}$. Let $M'$ be a maximal matching of $G$. If $w_{11}x_{12}$ is in $M'$ then $M'-w_{11}x_{12}+v_1w_{11}+w_{12}x_{12}+v_2w_{21}$ is a maximal matching of $M$, and otherwise $M'+v_1w_{12}+v_2w_{21}$ is. Since $w_{11}$ has degree two in $G$, it has degree two in $G'$ as well, and hence $G'$ has no cubic component, so $\delta_I=0$. Thus $\delta_{\lb}  \geq \frac{4\delta_n-\delta_m}{6} \geq \frac{4\cdot 5-8}{6} =2 \geq \delta_{\gamma}$; contradiction.

Therefore we may henceforward assume, without loss of generality, that $w_{12}$ and $w_{21}$ have degree three. 
  Let $x_{121}$ and $x_{122}$ denote the two neighbours of $w_{12}$ that are distinct from $v_1$. Similarly, let $x_{211}$ and $x_{212}$ denote the two neighbours of $w_{21}$ that are distinct from $v_2$.

Now the real fun begins. Let $V_0:=\left\{ u,v_1,v_2,w_{12},w_{21} \right\}$ and consider its neighbours $N=\left\{w_{11},x_{121},x_{122},x_{211},x_{212},w_{22} \right\}$. Note that that these neighbours are not necessarily all distinct, so in what follows, we treat $N$ as a multi-set. \footnote{We could also avoid treating $N$ as a multi-set by instead doing a case analysis on $N$ containing $4,5$ or $6$ distinct vertices of $G$. This would however essentially necessitate repeating the same long argument three times. When reading the current proof, the reader may find it convenient to have in mind the case that all elements of $N$ are distinct vertices of $G$.}

We first discuss two consequences from the fact that $G$ is bridgeless. For each $x\in N$, let $A(x)$ denote the component of $G-V_0$ containing $x$. Each $y_1\in N$ either has more than one neighbour in $V_0$ (in which case $y_1=y_2$ for some $y_2 \in N-\left\{y_1\right\}$) or has exactly one neighbour $v$ in $V_0$ (in which case the fact that $y_1v$ is not a bridge implies $A(y_1)=A(y_2)$ for some $y_2 \in N-\left\{y_1\right\}$). Thus: 
\begin{equation}\label{prop:single}
\text{ For every } y_1 \in N \text{ there exists } y_2 \in N-\left\{y_1\right\} \text{ such that } A(y_1)=A(y_2).
\end{equation}

Furthermore, in particular using that $uv_1$ is not a bridge, we obtain 
\begin{equation}\label{prop:cross}
A(y_1)=A(y_2) \text{ for some } y_1 \in \left\{w_{11},x_{121},x_{122}\right\} \text{ and some } y_2 \in \left\{w_{22},x_{211},x_{212}\right\}.
\end{equation}

Since we are working under the assumption of Case $2.3.2$, we know that there exist $q_1 \in \left\{x_{121},x_{122}\right\}$ and $q_2 \in \left\{ x_{211},x_{212} \right\}$ such that
\begin{equation}\label{eq:nonadjacency}
w_{11}q_1 \notin E(G) \text{ and } w_{22}q_2 \notin E(G).
\end{equation}

Given a choice of such $q_1,q_2$, we consider the graph $G'=G-V_0+w_{11}q_1+w_{22}q_2$. 
Our main task is to show that $q_1$ and $q_2$ can be chosen such that additionally $G'$ has no cubic component (see Figure~\ref{fig:case2_3_2}). Once we have established that, we can use that $G'$ satisfies Theorem~\ref{thm:maincubic}, as we will do at the very end.

 Given a choice of  $q_1 \in \left\{x_{121},x_{122}\right\}, q_2 \in \left\{ x_{211},x_{212} \right\}$ satisfying (\ref{eq:nonadjacency}) and a vertex $x\in N$, we let $C(x)$ denote the component of $G'$ that contains $x$. If $x$ is distinct from $w_{11},q_1,w_{22},q_2$ then $x$ has degree less than three in $G'$, so it is immediate that $C(x)$ is not cubic. Therefore it suffices to show that $C(w_{11})$ and $C(w_{22})$ are not cubic; in particular, it suffices to show that 
\begin{equation}\label{prop:atleast}
\text{both } C(w_{11}) \text{ and } C(w_{22}) \text{ contain at least one vertex of } N - \left\{ w_{11},w_{22},q_1,q_2 \right\}.
\end{equation}
 To demonstrate that there is indeed a choice of $q_1,q_2$ satisfying (\ref{eq:nonadjacency}) and (\ref{prop:atleast}), let us introduce the auxiliary graph $H$ on the vertex set $N$ (with six elements) in which two vertices are adjacent if and only if they belong to the same component of $G-V_0$. (In particular $a,b \in N$ are adjacent in $H$ if $a$ and $b$ are the same vertex in $G$.) Furthermore, let $H^{+}$ be the graph obtained from $H$ by adding the two edges $w_{11}q_1$ and $w_{22}q_2$, if they were not already present in $H$. To satisfy requirement~(\ref{prop:atleast}), it suffices to show that $H^{+}$ contains a component with at least five vertices, for some choice of $q_1$ and $q_2$.

By property~(\ref{prop:single}), $H$ has minimum degree at least one. By property~(\ref{prop:cross}), $H$ contains an edge $y_1y_2$ for some $y_1 \in N_1:=\left\{w_{11},x_{121},x_{122}\right\}$ and $y_2 \in N_2:= \left\{ w_{22},x_{211},x_{212}\right\}$. 

Suppose first that $N_1$ and $N_2$ are independent sets in $H$. Then $H$ must contain three edges that form a matching $M_H$ between $N_1$ and $N_2$. If $w_{11}$ and $w_{22}$ are adjacent in $M_H$, then choose $q_1$ and $q_2$ to be nonadjacent (with respect to $M_H$), and otherwise choose $q_1$ and $q_2$ to be adjacent (in $M_H$). In both cases, all six vertices of $H^+$ belong to the same component, as desired.

Thus we may assume that at least one of $H[N_1]$ and $H[N_2]$ has a connected component $C$ on at least two vertices, say $H[N_1]$ has. Then there is a choice of $q_1$ such that all three vertices of $N_1$ belong to the same component of $H+w_{11}q_1$. Furthermore, if $y_2\neq w_{22}$ and $y_2w_{22} \notin E(H)$ then choose $q_2=y_2$, and otherwise choose $q_2$ to be an arbitrary element of $\left\{x_{211}, x_{212} \right\}$ for which $w_{22}q_2\notin E(G)$. This ensures that in $H^+$, all five vertices of $N_1 \cup  \left\{q_2\right\} \cup \left\{w_{22}\right\}$ belong to the same component, as desired.

\begin{figure}[h]
\centering
\includegraphics[width=0.95\textwidth]{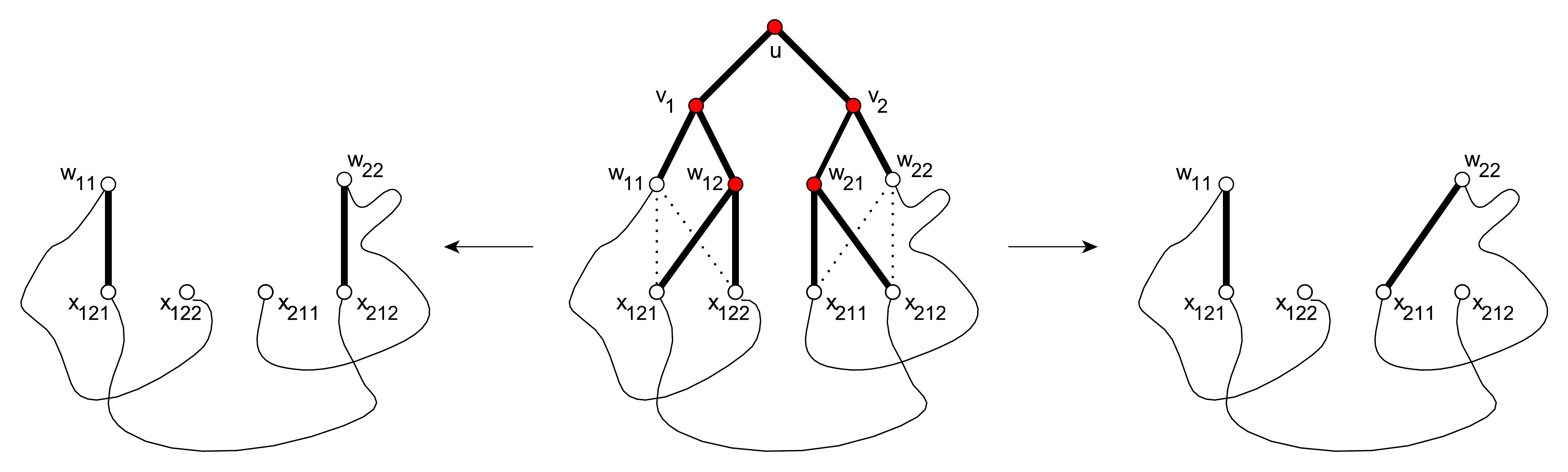}
\caption{The structure in Case $2.3.2$. In the middle is an example of the graph $G$, depicting one possible way in which the neighbours of $V_0=\left\{u,v_1,v_2,w_{12},w_{21}\right\}$ can be connected in $G-V_0$. At least two of the dotted edges are not present in $G$. On the left and right are depicted two possible choices of added edges $w_{11}q_1, w_{22}q_2$ corresponding to two distinct possibilites for the reduced graph $G'=G-V_0+w_{11}q_1+w_{22}q_2$. The reduced graph on the right is not a good choice as there the component containing $w_{22}$ could be cubic. The reduced graph on the left is a good choice because there each component (in fact the only component in this case) contains $x_{122}$, which has degree less than three in $G'$.}
\label{fig:case2_3_2}
\end{figure}

This concludes the proof that there exists a choice of $q_1 \in \left\{x_{121},x_{122}\right\}$ and $q_2 \in \left\{x_{211},x_{212} \right\}$ such that $w_{11}q_1 \notin E(G)$ and $w_{22}q_2 \notin E(G)$ and the graph $G'=G-V_0+w_{11}q_1+w_{22}q_2$ has no cubic component. Furthermore, $G'$ also has maximum degree at most three (this follows from the fact that in $G$, all vertices of $N$ have at least one neighbour in $V_0$, while if $q_1=q_2$ then $q_1$  has at least two neighbours in $V_0$).

We are now finally ready to use that $G'$ satisfies Theorem~\ref{thm:maincubic}. By the analysis above we have $\delta_I=0$. Furthermore, $\delta_n=5$. In constructing $G'$ we have deleted ten edges and added back two edges, so $\delta_m= 10-2=8$. It remains to estimate $\delta_{\gamma}$. Let $M'$ be a maximal matching of $G'$.

Let $M$ be obtained from $M'$ by

\begin{itemize}
\item adding $v_1w_{12}$, if $w_{11}q_1 \notin M'$;
\item removing $w_{11}q_1$ and adding $v_1w_{11}$ and $w_{12}q_1$, otherwise.
\end{itemize}
and

\begin{itemize}
\item adding $v_2w_{21}$, if $w_{22}q_2 \notin M'$;
\item removing $w_{22}q_2$ and adding $v_2w_{22}$ and $w_{21}q_2$, otherwise.
\end{itemize}

Then $M$ is a maximal matching of $G$ of size $|M'|+2$, so $\delta_{\gamma} \leq 2$. In conclusion, $\delta_{\lb} \geq \frac{4\delta_n-\delta_m+2\delta_I}{6} = \frac{4 \cdot 5-8}{6}=2 \geq \delta_{\gamma}$, contradiction.
\end{proof}

\textbf{The finishing blow}


Lemmas~\ref{lem:nodegreeone},~\ref{lem:noadjacentdegreetwos} and~\ref{lem:nocubicneighbours} together imply that $G$ is a cubic graph. Choose two adjacent vertices $u_1$ and $u_2$. 
We first consider the case that $u_1$ and $u_2$ have a common neighbour $v$. In that case $v$ has degree $1$ in the reduced graph $G'= G- \left\{u_1, u_2\right\}$, so that $\delta_{n_1}\leq -1$. Unlike $G$, our new graph $G'$ has no cubic component, so $\delta_I=1$. Since $G$ is not isomorphic to $K_{4}$, no component of $G'$ is isomorphic to $K_2$, so $\delta_K=0$.  Any given maximal matching $M'$ of $G'$ can be extended to the maximal matching $M'+u_1u_2$ of $G$; therefore $\delta_{\gamma}\leq 1$. Moreover, $\delta_n=2$ and $\delta_m=5$. Everything together, we obtain that $\delta_{\lb} = \frac{4\delta_n-\delta_m-\delta_{n_1}+\delta_K+ 2\delta_I}{6} \geq \frac{4\cdot 2-5-(-1)+0+2\cdot 1}{6}= 1 \geq \delta_{\gamma}$; contradiction.

Therefore $u_1$ and $u_2$ have no common neighbour. Let $v_{11},v_{12}$ denote the two neighbours of $u_1$ that are distinct from $u_2$, and let $v_{21},v_{22}$ be the neighbours of $u_2$ that are distinct from $u_1$. Consider the sets $V_0=\left\{u_1,u_2\right\}$ and $E^*=\left\{ v_{11}v_{21},v_{11}v_{22},v_{12}v_{21},v_{12}v_{22}\right\}$. Since $G$ is not isomorphic to $K_{3,3}$ by assumption, it follows that at least one element of $E^*$ is not an edge in $G$. Hence by Lemma~\ref{lem:technicallemma} applied to $V_0$ and $E^*$, we obtain (without loss of generality) that $v_{11}v_{22}\notin E(G)$ and the reduced graph $G'=G-V_0 + v_{11}v_{22}$ has no cubic component. We therefore have $\delta_I=1$, and also $\delta_n=2$ and $\delta_m=5-1=4$. Moreover, $\delta_{\gamma}\leq 1$ because any given maximal matching $M'$ of $G'$ can be extended to the maximal matching $M' - v_{11} v_{22} + u_1 v_{11} + u_2 v_{22}$ of $G$, if $v_{11}v_{22}\in M'$, and to $M' + u_1 u_2$ otherwise. It follows that

$$\delta_{\lb}\geq\frac{4\delta_n-\delta_m+2\delta_I}{6}=\frac{4\cdot 2-4+2\cdot 1}{6} = 1 \geq \delta_{\gamma},$$

a contradiction. This concludes the proof of Theorem~\ref{thm:maincubic}.

\qed


\textbf{Acknowledgements}

This research was supported by $\geq 43$ cups of coffee and an ARC grant from the Wallonia-Brussels Federation. The author thanks the two anonymous reviewers for their detailed comments and suggestions.

\bibliographystyle{abbrv}
\bibliography{bibliography}

\end{document}